\newcommand{\psum}{%
  \DOTSB\mathop{%
    \smash{\sideset{}{'}\sum}%
    \vphantom{\sum}%
  }\slimits@
}
\newcommand{\FF}{\mathbb{F}}
\newcommand{\TT}{\mathbb{T}}
\newcommand{\LL}{\mathbb{L}}
\newcommand{\CC}{\mathbb{C}}
\newcommand{\bA}{\mathbb{A}}
\newcommand{\KK}{\mathbb{K}}
\newcommand{\pfrak}{\mathfrak{p}}
\newcommand{\qfrak}{\mathfrak{q}}
\newcommand{\ffrak}{\mathfrak{f}}
\newcommand{\Gal}{\operatorname{Gal}}
\newcommand{\pitilde}{\widetilde{\pi}}
\newcommand{\GL}{\operatorname{GL}}
\newcommand{\Frac}{\operatorname{Frac}}
\newcommand{\ts}[1]{\mathcal{D}_{#1}}  
\newcommand{\ps}[1]{[\![#1]\!]}  
\newcommand{\ls}[1]{(\!(#1)\!)}
\newcommand{\Fq}{\FF_{\! q}}
\newcommand{\Fqac}{\Fq^{\rm ac}}
\newcommand{\CI}{\CC_\infty}
\newcommand{\hd}[2]{\partial_t^{(#1)}\!\!\left(#2\right)}
\newcommand{\hde}[1]{\partial_t^{(#1)}}  
\newcommand{\sep}{{\rm sep}}
\newcommand{\id}{{\rm id}}
\newcommand{\Fper}{F^{\rm perf}}
\newcommand{\Fac}{F^{\rm ac}}
\newcommand{\Fsep}{F^{\rm sep}}
\newcommand{\dumot}{M_\phi}
\newenvironment{smatrix}{\left(\begin{smallmatrix}}{\end{smallmatrix}\right)} 
\theoremstyle{plain}
\newtheorem{thm}{Theorem}[section]
\newtheorem*{thm*}{Theorem}
\newtheorem{cor}[thm]{Corollary}
\newtheorem{lem}[thm]{Lemma}
\newtheorem{prop}[thm]{Proposition}
\newtheorem*{thmA}{Theorem A}
\newtheorem*{thmB}{Theorem B}
\theoremstyle{definition}
\newtheorem{defn}[thm]{Definition}
\newtheorem{rem}[thm]{Remark}
\title[Taylor coefficients of Anderson generating functions]{Taylor coefficients of Anderson generating functions and Drinfeld torsion extensions}
\author{A. Maurischat}
\address{FH Aachen University of Applied Sciences \& RWTH Aachen University,
Germany}
\email{maurischat@fh-aachen.de}
\author{R. Perkins}
\address{University of California, San Diego}
\email{rudolph.perkins@gmail.com}
\date{March 24th, 2021}
\begin{document}

\begin{abstract}
We generalize our work on Carlitz prime power torsion extension to
torsion extensions of Drinfeld modules of arbitrary rank.\\
As in the Carlitz case, we give a description of these extensions in terms of evaluations of  Anderson generating functions and their hyperderivatives at roots of unity. 
We also give a direct proof that the image of the Galois representation attached to the  $\pfrak$-adic Tate module lies in the $\pfrak$-adic points of the motivic Galois group. This is a generalization of the corresponding result of Chang and Papanikolas for the $t$-adic case.
\end{abstract}

\maketitle

\setcounter{tocdepth}{1}

\tableofcontents

\section{Introduction}\label{sec:intro}

In function field arithmetic, interesting Galois extensions arise by adjoining torsion points of Drinfeld modules over a given function field $F$ to that function field. The first part of this article is to show that the prime power torsion points can be obtained in a unified manner by evaluating certain functions at roots of the different primes. In the second part, we use this result to prove a direct link between the Galois representations attached to the Drinfeld module and the motivic Galois group.
To become more precise, we introduce some notation.

Let $\Fq$ be the finite field with $q$ elements, and let $\theta$ an indeterminate over this field. We are interested in Drinfeld $\Fq[\theta]$-modules $\phi$ of rank $r$ defined over some finite extension $F$ of $K := \Fq(\theta)$. Let $\CI$ be the completion of the algebraic closure of $K_\infty := \Fq\ls{1/\theta}$ equipped with the canonical extension $|\cdot|$ of the absolute value for which $K_\infty$ is complete and normalized so that $|\theta| = q$. We consider $F$ being embedded in $\CI$.

We fix a monic irreducible polynomial $\pfrak \in A=\Fq[\theta]$ (called a \textit{prime} of $A$), and its roots $\zeta=\zeta_1, \zeta_2,\ldots, \zeta_d\in \Fqac$ where $d=\deg(\pfrak)$. We will focus on the following {\it prime power torsion extensions} of $F$,
\[ F_n:=F(\phi[\pfrak^{n+1}]); \quad n \geq 0,\]
arising by adjoining the  $\pfrak^{n+1}$-torsion of $\phi$ to $F$.

In \cite{MP}, we considered the case of $\phi=C$ being the Carlitz module. There, we reproved a theorem of Angl\`es-Pellarin (see \cite[Theorem 3.3]{APinv}) using different methods that the torsion extension $F(\zeta,C[\pfrak^{n+1}])$ of $F(\zeta)$ is generated by the evaluations of the $n$-th hyperderivative of the Anderson-Thakur function at the roots of $\pfrak$. Our method, however, revealed more information and enabled us to provide an integral basis of the extension using monomials in these values (cf.~\cite[Thm.~3.15]{MP}).

In this article, we generalize our method to Drinfeld modules of arbitrary rank.
Let $z_1,\ldots, z_r$ be an $A$-basis of the period lattice $\Lambda_\phi$ of $\phi$, and $\omega_1,\ldots,\omega_r$ the associated 
Anderson generating functions. We show the following (see Cor.~\ref{cor:generation-by-omega-n-zeta-k} and Thm.~\ref{Fqbasisthm}).

\begin{thmA} 
For all $n\geq 0$,
\[  F(\zeta, \omega_j^{(n)}(\zeta_k)| 1\leq j\leq r;\, 1\leq k\leq d) = F_n(\zeta)
= F(\zeta,\phi[\pfrak^{n+1}]). \]

Furthermore, define $c_{j,(m),l}\in\CI$ by $\omega_j^{(m)}(\zeta_k)=\sum_{l=0}^{d-1} c_{j,(m),l}\zeta_k^l$ for all $k=1,\ldots,d$.
Then the set of coefficients 
\[ \{ c_{j,(m),l} \mid 1\leq j\leq r; 0\leq m\leq n; 0\leq l\leq d-1 \} \]
forms an $\Fq$-basis of $\phi[\pfrak^{n+1}]$.
\end{thmA}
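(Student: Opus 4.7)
The plan is to first show each coefficient $c_{j,(m),l}$ lies in $\phi[\pfrak^{m+1}]$ via the standard functional equation of the Anderson generating function, then to identify $c_{j,(0),l}$ explicitly as $\exp_\phi$ of a lattice element so as to deduce linear independence at level zero, and finally to pass to all $n$ via the natural filtration.

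\textbf{Torsion of the coefficients.} Anderson generating functions satisfy $\phi_a(\omega_j)(t) = a(t)\omega_j(t)$ for every $a \in A$, where $\phi_a$ acts coefficient-wise in $t$; in particular $\phi_{\pfrak^{m+1}}(\omega_j) = \pfrak(t)^{m+1}\omega_j$. Applying the hyperderivative $\partial_t^{(m)}$, which commutes with this coefficient-wise action, together with the Leibniz rule gives
\[
\phi_{\pfrak^{m+1}}\!\bigl(\omega_j^{(m)}\bigr)(t) \;=\; \sum_{i=0}^{m} \partial_t^{(i)}\!\bigl(\pfrak(t)^{m+1}\bigr)\,\omega_j^{(m-i)}(t),
\]
and each summand is divisible by $\pfrak(t)$ since $\pfrak(t)^{m+1-i}$ divides $\partial_t^{(i)}(\pfrak(t)^{m+1})$ in $\Fq[t]$ for $0 \le i \le m$. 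Reducing modulo $\pfrak(t)$ (which is preserved by the coefficient-wise action since $\pfrak \in \Fq[t]$) and identifying the reduction of $\omega_j^{(m)}(t)$ with the polynomial $\sum_l c_{j,(m),l} t^l$ via Lagrange interpolation at the $\zeta_k$, comparison of coefficients yields $\phi_{\pfrak^{m+1}}(c_{j,(m),l}) = 0$, so $c_{j,(m),l} \in \phi[\pfrak^{m+1}]$.

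\textbf{Explicit formula at level zero.} Expanding $\omega_j(\zeta_k) = \sum_i \exp_\phi(z_j/\theta^{i+1})\zeta_k^i$ and writing $\zeta_k^i = \sum_l \gamma_{i,l}\zeta_k^l$ with $\gamma_{i,l} \in \Fq$ depending only on $\pfrak$, the $\Fq$-linearity and continuity of $\exp_\phi$ yield
\[
c_{j,(0),l} \;=\; \exp_\phi\!\Bigl(z_j \sum_i \gamma_{i,l}/\theta^{i+1}\Bigr) \;=\; \exp_\phi\!\bigl(z_j\, B_l(\theta)/\pfrak(\theta)\bigr),
\]
where $B_l \in \Fq[\theta]$ is the unique polynomial of degree $<d$ characterized by $\sum_l \zeta^l B_l(\theta) = \pfrak(\theta)/(\theta - \zeta)$ for every root $\zeta$ of $\pfrak$. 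The matrix $(B_l(\zeta_k))_{l,k}$ equals $V^{-1}\,\mathrm{diag}(\pfrak'(\zeta_k))$ with $V$ the Vandermonde at the roots, hence is invertible, so $\{B_l \bmod \pfrak\}$ is an $\Fq$-basis of $A/\pfrak$. Combined with the $A$-independence of $z_1,\dots,z_r$, this forces $\Fq$-linear independence of $\{c_{j,(0),l}\}_{j,l}$ in $\phi[\pfrak]$, and comparing dimensions gives the $\Fq$-basis property for $\phi[\pfrak]$.

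\textbf{Lift to general $n$ and field equality.} I exploit the filtration $0 \subset \phi[\pfrak] \subset \dots \subset \phi[\pfrak^{n+1}]$, where $\phi_{\pfrak^m}$ induces an isomorphism of $\phi[\pfrak^{m+1}]/\phi[\pfrak^m]$ onto $\phi[\pfrak]$. A computation parallel to the torsion step shows
\[
\phi_{\pfrak^m}\!\Bigl(\textstyle\sum_l c_{j,(m),l} t^l\Bigr) \;\equiv\; \partial_t^{(m)}(\pfrak^m) \cdot \Bigl(\textstyle\sum_l c_{j,(0),l} t^l\Bigr) \pmod{\pfrak(t)},
\]
and $\partial_t^{(m)}(\pfrak^m)(\zeta_k) = \pfrak'(\zeta_k)^m \neq 0$, so multiplication by it is an $\Fq$-linear automorphism of the quotient preserving the $\Fq$-span of coefficient vectors. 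Thus $\{\phi_{\pfrak^m}(c_{j,(m),l})\}_{j,l}$ spans $\phi[\pfrak]$, so $\{c_{j,(m),l}\}_{j,l}$ forms a basis of the $m$-th graded piece; summing over $m = 0,\dots,n$ gives the claimed $\Fq$-basis of $\phi[\pfrak^{n+1}]$. The field equality then follows immediately in one direction, and in the other direction Vandermonde inversion gives $c_{j,(n),l} \in F(\zeta, \omega_j^{(n)}(\zeta_k))$, while the recursion $c_{j,(m-1),l} = \phi_t(c_{j,(m),l}) - c_{j,(m),l-1} + a_l\, c_{j,(m),d-1}$, obtained by hyperderivating $\phi_t(\omega_j) = t\omega_j$ and reducing, recovers all lower-level $c_{j,(m),l}$. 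The main obstacle throughout is the careful bookkeeping distinguishing the $\Fq$-linear $\phi_a$-action from $\CI$-linear $t$-manipulations, which is precisely what makes working modulo $\pfrak(t) \in \Fq[t]$ both necessary and tractable.
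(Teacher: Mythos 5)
Your argument is correct, and it takes a genuinely different route for the inductive step than the paper does. The paper first establishes the explicit formula $c_{j,(n),l}=\exp_\phi(z_j\,\qfrak_{(n),l}/\pfrak^{n+1})$ for \emph{all} $n$ simultaneously (Prop.~\ref{explicitcoeffsprop}), using Pellarin's partial-fraction identity $\omega_j(t)=\sum_m z_j^{q^m}e_m/(\theta^{q^m}-t)$, whose $n$-th hyperderivative is explicit, together with the interpolation polynomial $\qfrak_{(n)}(\theta,T)$. It then applies the Vandermonde argument level by level to show the $\Fq$-span of $\{\qfrak_{(n),l}\}_l$ surjects onto $A/\pfrak$. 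You instead prove the explicit formula only at level $0$ (which follows straight from the power-series definition of $\omega_j$ and does not require Pellarin's identity), establish the basis property of $\{c_{j,(0),l}\}_{j,l}$ in $\phi[\pfrak]$ via the same Vandermonde argument, and then climb the $\pfrak$-filtration via the observation that $\phi_{\pfrak^m}$ sends $\omega_j^{(m)}(\zeta_k)$ to $\bigl(\hde{m}(\pfrak^m)\bigr)(\zeta_k)\cdot\omega_j(\zeta_k)=\pfrak'(\zeta_k)^m\,\omega_j(\zeta_k)$; since $\hde{m}(\pfrak^m)$ is a unit in $\Fq[t]/(\pfrak)$, multiplication by it is an $\Fq$-linear automorphism and so the images $\phi_{\pfrak^m}(c_{j,(m),l})$ span the same $\Fq$-space as the level-zero coefficients, namely $\phi[\pfrak]$. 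The two approaches buy different things: the paper's route also yields a clean closed formula for every coefficient $c_{j,(n),l}$ (which the paper reuses in Lemma~\ref{galoisactionlem} and in Section~\ref{sec:modular-forms}), while your route is more economical for the present theorem alone, avoiding both Pellarin's identity and the general interpolation polynomial $\qfrak_{(n)}$, and it makes the graded structure of $\phi[\pfrak^{n+1}]$ transparent. Your downward recursion $c_{j,(m-1),l}=\phi_\theta(c_{j,(m),l})-c_{j,(m),l-1}+\alpha_l\,c_{j,(m),d-1}$ (note: $\phi_\theta$ and $\alpha_l$, not $\phi_t$ and $a_l$ as written) for the reverse inclusion of fields is also a nice addition; the paper leaves Cor.~\ref{cor:generation-by-omega-n-zeta-k} without an explicit argument for that direction.
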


Here $\omega_j^{(n)}$ is the $n$-th hyperderivative of $\omega_j$ (see Section \ref{sec:generalities}).
We also consider the $\pfrak$-adic Tate module 
$T_\pfrak(\phi)=\lim\limits_{\leftarrow} \phi[\pfrak^{n+1}]$
which is a free rank $r$ module over $A_\pfrak$, the completion of $A$ at $\pfrak$. The Tate module is equipped with an action of the absolute Galois group
$\Gal(F^{\sep}/F)$ which after a choice of a basis of $T_\pfrak(\phi)$ defines a representation
\[\varphi_\pfrak:\Gal(F^{\sep}/F) \rightarrow \GL_r(A_\pfrak). \]
The presentation of the torsion extensions via the $\omega_j^{(n)}(\zeta_k)$, and the fact that the Anderson generating functions $\omega_j$ and their twists are also closely related to the entries of a rigid analytic trivialization $\Psi$ for the associated dual $t$-motive $\dumot$ of $\phi$, are the basis to a direct link between the image of the Galois representation $\varphi_\pfrak$ and the Galois group $\Gamma_{\Psi}$ associated to the rigid analytic trivialization.

\begin{thmB} (Theorem~\ref{thm:image-of-gal-rep-in-motivic-gal-grp})\\
Let $K_\pfrak$ be the field of fractions of $A_\pfrak$. Then
 \[\varphi_\pfrak(\Gal(F^{\rm sep}/F))\subseteq \Gamma_{\Psi}(A_\pfrak):=\GL_r(A_\pfrak)\cap \Gamma_{\Psi}(K_\pfrak).\]
\end{thmB}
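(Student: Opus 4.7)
My plan is to follow the template set by Chang and Papanikolas in the $t$-adic case ($\pfrak=t-\theta$) and to generalize it using Theorem~A as the key bridge from $\pfrak^{n+1}$-torsion points to values of Anderson generating functions. Since $A_\pfrak = \varprojlim_n A/\pfrak^{n+1}$ and $\Gamma_\Psi$ is an affine group scheme, one has $\Gamma_\Psi(A_\pfrak)=\varprojlim_n \Gamma_\Psi(A/\pfrak^{n+1})$, so it is enough to fix $\sigma\in\Gal(F^{\sep}/F)$ and $n\ge 0$ and to show that the reduction $\bar\varphi:=\varphi_\pfrak(\sigma)\bmod\pfrak^{n+1}$ satisfies every defining equation of $\Gamma_\Psi$ modulo $\pfrak^{n+1}$; passage to the limit then yields the theorem.

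The central geometric input is the specialization of $\Psi$ at the roots $\zeta_k$ of $\pfrak$ via hyperderivatives. Because the entries of $\Psi$ are expressible through the Anderson generating functions $\omega_j$ and their Frobenius twists, the Taylor expansion of $\Psi$ at $\zeta_k$ modulo $(t-\zeta_k)^{n+1}$ defines a matrix over $E_k^{(n)}:=F^{\sep}[t]/(t-\zeta_k)^{n+1}$ whose entries are $F^{\sep}$-linear combinations of hyperderivatives $\omega_j^{(m)}(\zeta_k)$. Assembling these across $k=1,\ldots,d$ yields a single Taylor jet $\hat\Psi^{(n)}$ living over a ring canonically identified with $F^{\sep}\otimes_{\Fq} A/\pfrak^{n+1}$. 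By Theorem~A the entries of $\hat\Psi^{(n)}$ generate $F(\zeta,\phi[\pfrak^{n+1}])$, and the coefficients $c_{j,(m),l}$ form an $\Fq$-basis of $\phi[\pfrak^{n+1}]$; combined with the $A_\pfrak$-module structure on $T_\pfrak(\phi)$, this translates the Galois action on torsion into a Galois-equivariance identity
\[ \sigma\bigl(\hat\Psi^{(n)}\bigr)=\hat\Psi^{(n)}\cdot\bar\varphi \]
in $\GL_r\bigl(F^{\sep}\otimes_{\Fq} A/\pfrak^{n+1}\bigr)$. Establishing this identity precisely, in particular matching the Tate-module action by $\bar\varphi$ with right matrix multiplication on the hyperderivative data, is the heart of the proof and the main technical step.

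Once the equivariance identity is in hand, the conclusion follows the Papanikolas template. Any polynomial $f$ in the defining ideal of $\Gamma_\Psi$ (with coefficients in a $\Gal(F^{\sep}/F)$-stable subfield, e.g.\ $\Fqac(t)$) vanishes at $\Psi$, hence its specialization vanishes at $\hat\Psi^{(n)}$. Applying $\sigma$ to this specialization, using both that the coefficients of $f$ are $\sigma$-invariant and the equivariance identity, yields a vanishing of $f$ at $\hat\Psi^{(n)}\cdot\bar\varphi$. The group-scheme structure of $\Gamma_\Psi$, together with the fact that $\hat\Psi^{(n)}$ itself defines a $\Gamma_\Psi$-point of its ring of definition, then forces $\bar\varphi\in\Gamma_\Psi(A/\pfrak^{n+1})$. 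Taking the inverse limit over $n$ concludes. The main obstacles are (a) making the specialization and the equivariance identity precise over the correct rings, and (b) tracking how hyperderivatives in $t$ interact with the Frobenius twists appearing in the entries of $\Psi$, which is where the full strength of Theorem~A is needed.
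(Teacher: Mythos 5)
Your overall strategy matches the paper's: expand $\Psi_\phi$ in a Taylor series at the roots $\zeta_k$ of $\pfrak$, establish a Galois-equivariance identity relating $\sigma(\text{Taylor jet of }\Psi_\phi)$ to $\varphi_\pfrak(\sigma)$, and then run the Chang--Papanikolas argument. The paper's Proposition~\ref{prop:sigma-on-psi} is exactly the equivariance you identify as ``the heart of the proof,'' and it is proved from Proposition~\ref{galoisactionprop} using the explicit description of the torsion coefficients from Section~\ref{sec:coefficients-are-torsion}.

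There are two points to flag. First, a stylistic difference: you propose to work with finite jets in $F^{\sep}[t]/(t-\zeta_k)^{n+1}$ and then take an inverse limit. The paper instead works directly with the infinite Taylor map $\ts{\zeta}:\TT\to\CI\ps{X}$, extended to $\TT\otimes_{\Fq[t]}\bA_{\pfrak(t)}\to\CI\ps{X}$, which sidesteps the need to make sense of $\Gamma_{\Psi}(A/\pfrak^{n+1})$ over a non-reduced Artin ring and to verify that $\Gamma_{\Psi}(A_\pfrak)=\varprojlim_n\Gamma_\Psi(A/\pfrak^{n+1})$ in the category at hand (recall $\Gamma_\Psi$ is a group scheme over the field $\Fq(t)$, so the ``$A_\pfrak$-points'' are already defined ad hoc as $\GL_r(A_\pfrak)\cap\Gamma_\Psi(K_\pfrak)$). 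Your approach is salvageable, but it adds bookkeeping the paper avoids.

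The genuine gap is in the final step, where you write that vanishing of the specialized $f$ at $\hat\Psi^{(n)}\cdot\bar\varphi$ ``forces $\bar\varphi\in\Gamma_\Psi$'' via ``the group-scheme structure.'' This step is nontrivial and is where the paper does real work. After applying $\sigma$, one obtains, for every $h$ in the defining ideal $S$ and every root $\zeta$ of $\pfrak$, the vanishing $\ts{\zeta}\bigl(h(\Psi_\phi\otimes\chi_t(\mathcal{A}_\sigma^{-1}))\bigr)=0$ in $\CI\ps{X}$. But membership in $\Gamma_\Psi$ requires the vanishing of $h(\Psi_\phi\otimes\chi_t(\mathcal{A}_\sigma^{-1}))$ as an element of $P\otimes\bA_{\pfrak(t)}$, not merely of its Taylor specializations. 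Bridging this requires an injectivity (detection) lemma, which is Lemma~\ref{lem:Dzeta-on-tensor-product} in the paper: if $\ts{\zeta_k}(f)=0$ for all $k=1,\ldots,d$, then $f=0$ in $\LL\otimes_{\Fq(t)}\KK_{\pfrak(t)}$. The proof is a difference-algebra argument: the common kernel of the $\widetilde{\ts{\zeta_k}}$ is a $\tau\otimes\id$-stable ideal of $\LL\otimes_{\Fq(t)}\KK_{\pfrak(t)}$, and since $\LL$ has $\tau$-invariants $\Fq(t)$, any such ideal is generated by constants, on which $\widetilde{\ts{\zeta_k}}$ is injective. Without some version of this lemma your argument does not close, and nothing in the Chang--Papanikolas template for $\pfrak=\theta$ supplies it for free; that is precisely the new difficulty in passing from the $\theta$-adic to the general $\pfrak$-adic case.
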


The statement itself is already known (see e.g.~\cite[Section 6]{HJ}), but our proof provides an explicit description
of the representation $\Gal(F^{\rm sep}/F)\to \Gamma_{\Psi}(A_\pfrak)$. Our approach to prove the theorem is a generalization of Chang-Papanikolas' way of proving the case $\pfrak=\theta$ (see \cite[Theorem 3.5.1]{cc-mp:aipldm}).
Roughly speaking, $\sigma\in \Gal(F^{\rm sep}/F)$ respects the $F(t)$-relations among the entries of $\Psi$, and hence $\sigma(\Psi)$ also satisfies these relations.
Therefore the matrix $\Psi^{-1}\cdot \sigma(\Psi)$ which is the image of
$\sigma$ via the Galois representation is in the motivic group.
Compared to Chang-Papanikolas, the main difficulty is the preparation for obtaining it. 
We have to introduce Taylor expansions of the $\omega_j$ at $\zeta_k$, and to verify that the Galois action on these Taylor expansions is given by the representation $\varphi_\pfrak$.
These two points were immediate in the $\theta$-adic case, as the Anderson
generating functions are already given as generating series for the $\theta$-power torsion.

The article is structured as follows. In Section \ref{sec:generalities}, we provide the general definitions and notation used in the sequel. In Section \ref{sec:coefficients-are-torsion}, we consider the evaluations of the Anderson generating functions at roots of unity, and in particular we prove Theorem A. Section \ref{sec:galois-action} is dedicated to the Galois action of these values which lays the foundation to the proof of Theorem B given in Section \ref{sec:link-to-motivic-group}.
Finally, in Section \ref{sec:modular-forms}, we start some considerations on modularity when the lattice varies. We intend to work out these considerations in future work.

\section{Generalities}\label{sec:generalities}
\subsection{Basic Notation}
We will use the following notation which is almost the same as in \cite{MP}.

\begin{spacing}{1.2}
\begin{tabbing} 
\hspace*{2.9cm} \=  \kill
$\Fq$\> finite field with $q$ elements and characteristic $p$,\\
$\Fqac$\> algebraic closure of $\Fq$,\\
$A=\Fq[\theta]$\>  polynomial ring in the indeterminate $\theta$,\\
$K=\Fq(\theta)$\>  field of fractions of $A$,\\
$|\cdot|$\> norm on $K$ (and $K_\infty$ and $\CI$) given by $|a|=q^{\deg(a)}$ for $a\in A$,\\
$K_\infty= \Fq\ls{\tfrac{1}{\theta}}$\>  the completion of $K$ with respect to $|\cdot|$,\\
$\CI$\> the completion of the algebraic closure of $K_\infty$,\\
$\tau:\CI\to \CI$\> $q$-Frobenius homomorphism given by $\tau(x)=x^q$,\\ 
$\pfrak\in A$\>  monic irreducible polynomial,\\
$d=\deg(\pfrak)$\>  degree of $\pfrak$,\\
$\zeta\in \Fqac \subset \CI$\> arbitrary root of $\pfrak$,\\
$\zeta_1,\ldots, \zeta_d\in \Fqac$\>  all roots of $\pfrak$  with $\zeta_{i+1}= \zeta_i^q$ for $1 \leq i \leq d-1$, and $\zeta_1= \zeta_d^q$,\\
$A_\pfrak$ \> $\pfrak$-adic completion of $A$,\\
$K_\pfrak$ \>  $\pfrak$-adic completion of $K$ equal to the field of fractions of $A_\pfrak$,\\
$F\subset \CI$\> some field containing $K$ (usually a finite extension of $K$),\\
$\Fsep,\Fac\subset \CI$\> the separable algebraic resp.~the algebraic closure of $F$ in $\CI$,\\
$\Fper\!=\!F^{\frac{1}{p^\infty}}\!\subset\! \CI$\> the perfect closure of $F$ in $\CI$,\\
$\TT$\> Tate algebra over $\CI$ in one indeterminate $t$, i.e., \\
\> $\TT  = \left\{ \sum_{i \geq 0} c_i t^i \in \CI\ps{t} : c_i \rightarrow 0 \text{ as } i \rightarrow \infty \right\}$,\\
\> power series converging on the unit disc,\\
$ \chi_t:A \to \TT$\> $\Fq$-algebra homomorphism  determined by $\chi_t(\theta) = t$,\\
$a(t) \in \TT$\> further notation for the image of $a \in A$ under $\chi_t$,\\
$h(z)=h|_{t=z}$ \> evaluation of $h\in \TT$ at $z\in \CI$ with $|z|\leq 1$.
\end{tabbing}
\end{spacing}
Furthermore, we denote by $R^{n \times m}$ the space of $n\times m$ matrices with coefficients in a ring $R$. If we have an action of a group, etc. on $R$ we shall write $\sigma(T) = (\sigma T_{ij})$ for $\sigma$ in the group and $T = (T_{ij}) \in R^{n \times m}$.

\pagebreak

\subsection{Operations on the Tate algebra} 
In this subsection, we introduce twists and hyperdifferential operators on power series in the Tate algebra $\TT$, as well as some properties of these.\footnote{The twists and hyperdifferential operators can also be defined for arbitrary power series, but we will not need this.
}

For $h = \sum c_i t^i \in \TT$, we define the {\it $j$-th Anderson twist} of $h$,
\[h^{\tau^j} := \sum \tau^j(c_i) t^i= \sum c_i^{q^j} t^i,\]
and extend these $\CI$-linearly to the (non-commutative) twisted polynomial ring $\CI\{\tau\}$. We write
\[ h^{\ffrak} := \sum_{i \geq 0} \ffrak(c_i) t^i\]
for the action of $\ffrak \in \CI\{\tau\}$ on $h \in \TT$. \footnote{
Although, the notation with a superscript might suggest that it is a right action, we indeed have a left action of $\CI\{\tau\}$. However, this will never cause a problem in this article.}

We also employ the \textit{hyperdifferential operators} with respect to $t$, i.e.~the sequence of $\CC_\infty$-linear maps $(\hde{n})_{n\geq 0}$ given by
\[    \hd{n}{\sum_{i=0}^\infty c_it^i} =\sum_{i=0}^\infty \binom{i}{n} c_it^{i-n}=\sum_{i=n}^\infty \binom{i}{n} c_it^{i-n}, \]
where $\binom{i}{n}\in \FF_{\! p}\subset \Fq$ is the binomial coefficient modulo $p$.
We will shortly write $h^{(n)}$ for the \textit{hyperderivative} $\hd{n}{h}$, and note that the hyperdifferential operators satisfy the Leibniz rule
\[ (hg)^{(n)} = \sum_{j = 0}^n h^{(j)} g^{(n-j)},\]
among other calculus rules.
Obviously, the $\CI\{\tau\}$-action and the hyperdifferential operators commute with each other.

For $a \in A$, we may abuse notation by writing $a^{(n)}$ for $\chi_t(a)^{(n)}=a(t)^{(n)}$. 

As all elements in $\Fqac$ have norm equal to $1$, we can evaluate power series in the Tate algebra at the $\zeta_k\in \Fqac$.
As shown in \cite[Lemma 2.1]{MP}, this evaluation and the twisting interact in the following very nice manner. 

\begin{lem}[{\bf Evaluation at roots of unity and twisting}]  \label{twistevallem}  \
\begin{enumerate}
\item For $h \in \TT$, there are unique $f_0,\dots, f_{d-1}\in \CI$ such that
for every $k = 1,\dots,d$, one has
\begin{equation} h|_{t = \zeta_k} = \sum_{l = 0}^{d-1} f_l \zeta_k^l.  \label{TTrtof1eval} \end{equation}
\item For all $\ffrak \in \CI\{\tau\}$, with $h$ written as in \eqref{TTrtof1eval} above, we have for every $k = 1,\dots,d$,
\[ h^{\ffrak}|_{t = \zeta_k} = \sum_{l = 0}^{d-1} \ffrak(f_l) \zeta_k^l. \]
\end{enumerate}
\end{lem}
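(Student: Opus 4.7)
My proof plan proceeds in two steps, following the two parts of the statement.

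For part (1), the key observation is that every $\zeta_k$ lies in the closed unit disc of $\CI$, so the Tate series $h$ converges at $t=\zeta_k$ and each value $h(\zeta_k)\in\CI$ is well-defined for $k=1,\ldots,d$. Equation \eqref{TTrtof1eval} for all $k$ then amounts to a $\CI$-linear system $V\mathbf{f}=\mathbf{h}$, where $V=(\zeta_k^l)_{1\le k\le d,\,0\le l\le d-1}$ is the Vandermonde matrix formed from the roots. Since $\pfrak$ is irreducible over the perfect field $\Fq$, it is separable, so the $\zeta_k$ are pairwise distinct and $V$ is invertible over $\CI$. This yields existence and uniqueness of $f_0,\ldots,f_{d-1}$.

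For part (2), since the action of $\CI\{\tau\}$ is $\CI$-linear in $\ffrak$ and both sides of the claimed identity are $\CI$-linear in $\ffrak(f_l)$, it suffices to treat the case $\ffrak=\tau^j$. The essential input is the cyclic relation $\zeta_m^{q^j}=\zeta_{m+j}$ (with indices read modulo $d$), which follows by iterating $\zeta_i^q=\zeta_{i+1}$. Starting from \eqref{TTrtof1eval} at $k=m$ and raising both sides to the $q^j$-th power,
\[
h(\zeta_m)^{q^j} \;=\; \sum_{l=0}^{d-1} f_l^{q^j}\,\zeta_m^{lq^j} \;=\; \sum_{l=0}^{d-1} \tau^j(f_l)\,\zeta_{m+j}^{\,l}.
\]
On the other hand, evaluating the twisted series directly,
\[
h^{\tau^j}(\zeta_{m+j}) \;=\; \sum_{i\ge 0} c_i^{q^j}\,\zeta_{m+j}^{\,i} \;=\; \sum_{i\ge 0} c_i^{q^j}\,\zeta_m^{iq^j} \;=\; \Bigl(\sum_{i\ge 0} c_i\,\zeta_m^i\Bigr)^{q^j} \;=\; h(\zeta_m)^{q^j}.
\]
Setting $k=m+j\bmod d$ and noting that $k$ ranges over $1,\ldots,d$ as $m$ does gives the claim.

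The only real pitfall is bookkeeping in part (2): the cyclic shift $k\mapsto k+j\bmod d$ has to be applied consistently on both sides, and one should be mindful that the action of $\CI\{\tau\}$ is defined by extending $\CI$-linearly from the Anderson twist action $h\mapsto h^{\tau^j}$, so reducing to the monomial case is legitimate. Once those are handled, everything reduces to a direct computation using convergence on the unit disc and the Vandermonde invertibility from part (1).
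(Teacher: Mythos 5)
Your proof is correct, and the route you take — Vandermonde invertibility (using separability of $\pfrak$ over the perfect field $\Fq$) for existence and uniqueness in part (1), then reduction to $\ffrak=\tau^j$ by left $\CI$-linearity and the cyclic Frobenius permutation $\zeta_m^{q^j}=\zeta_{m+j}$ of the roots for part (2) — is the standard and natural one. Note that the paper itself gives no proof here but simply cites \cite[Lemma~2.1]{MP}; your argument is a complete self-contained derivation consistent with that reference.
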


\subsection{Taylor series}

From the properties of the hyperdifferential operators, it follows that the map
\[ \ts{} : \TT \rightarrow \TT\ps{X}, h\mapsto  \sum_{n \geq 0} h^{(n)} X^n \]
is a homomorphism of  $\CI$-algebras, and it can actually be
given by replacing the variable $t$ in the power series expansion for $h$ by $t+X$, expanding each $(t+X)^n$ using the binomial theorem, and rearranging to obtain a power series in $X$. 

Since elements of the Tate algebra may be evaluated at $z \in \CI$ such that $|z| \leq 1$, we can compose the map $\ts{}$ with evaluation at those elements, and we will write
$\ts{z}:\TT\to \CI\ps{X}$ for the $\CI$-algebra map
\[ h\mapsto \ts{z}(h)= \ts{}(h)|_{t = z}=\sum_{n=0}^\infty h^{(n)}(z) X^n .\]
Having in mind that $h^{(n)}$ corresponds to $\tfrac{1}{n!}\left(\tfrac{\partial}{\partial t}\right)^n(h)$ in characteristic zero, the map $\ts{z}$ is kind of a Taylor series expansion at $z$ (with $X$ standing for $(t-z)$), and the $h^{(n)}(z)$ are the Taylor coefficients.

Recognize that the map $\ts{z}:\TT\to \CI\ps{X}$ is still injective, as there is no element $h\in \TT\setminus \{0\}$ such that all hyperderivatives $h^{(n)}$ are divisible by $(t-z)$.

Furthermore, if $h(z) \neq 0$, then $\ts{z}(h)$ is invertible in $\CI\ps{X}$.

\medskip

In Section \ref{sec:link-to-motivic-group}, we need the following property of the map $\ts{\zeta}$, for $\zeta\in \Fqac$ a root of the prime polynomial $\pfrak$.

\begin{lem}\label{lem:D_zeta-isomorphism}
The composition $\ts{\zeta}\circ \chi_t:A\to \CI\ps{X}$ has image in $\Fq(\zeta)\ps{X}$ and extends to an isomorphism
\[ \ts{\zeta}\circ \chi_t:A_\pfrak \to \Fq(\zeta)\ps{X}. \]
\end{lem}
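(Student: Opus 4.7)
My plan is to unpack the composition directly on $A$, verify continuity so it extends to the completion $A_\pfrak$, and then check the extended map is an isomorphism by comparing associated graded pieces.

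First I would observe that for $a \in A$, $\chi_t(a) = a(t)$ is a polynomial in $t$, and the operator $\ts{\zeta}$ is simply substitution $t \mapsto \zeta + X$ when applied to polynomials. Indeed, using $h^{(n)} = \sum_k \binom{k}{n} c_k t^{k-n}$ for $h = \sum c_k t^k$ and resumming, one checks $\ts{\zeta}(t^k) = (\zeta+X)^k$, so
\[ (\ts{\zeta} \circ \chi_t)(a) = a(\zeta + X) \in \Fq[\zeta][X] = \Fq(\zeta)[X], \]
which gives the first assertion. Denote this map by $\Phi$, viewed as a map $A \to \Fq(\zeta)\ps{X}$.

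Next I would establish the key valuation estimate. Compute $\Phi(\pfrak) = \pfrak(\zeta+X)$. Because $\Fq$ is perfect, the irreducible polynomial $\pfrak$ is separable, so $\pfrak'(\zeta)\neq 0$; Taylor-expanding yields $\pfrak(\zeta+X) = \pfrak'(\zeta)\,X + O(X^2) = X\cdot v(X)$, where $v \in \Fq(\zeta)\ps{X}^\times$ has nonzero constant term $\pfrak'(\zeta)$. Hence $\Phi(\pfrak^n) \in X^n\,\Fq(\zeta)\ps{X}$, so $\Phi$ is continuous for the $\pfrak$-adic topology on $A$ and the $X$-adic topology on $\Fq(\zeta)\ps{X}$. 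Since the target is $X$-adically complete, $\Phi$ extends uniquely to a continuous $\Fq$-algebra homomorphism $\hat\Phi: A_\pfrak \to \Fq(\zeta)\ps{X}$.

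Finally, to prove $\hat\Phi$ is an isomorphism, I would examine the induced maps on finite quotients $\hat\Phi_n : A_\pfrak/\pfrak^n \to \Fq(\zeta)\ps{X}/X^n$. Modulo the respective uniformizers, $\hat\Phi$ induces the standard identification $A_\pfrak/\pfrak \xrightarrow{\sim} \Fq(\zeta)$ (as $\hat\Phi(\theta) \equiv \zeta \pmod{X}$), and on the $i$-th graded piece $\pfrak^i/\pfrak^{i+1} \to X^i\Fq(\zeta)\ps{X}/X^{i+1}$ the map sends $\pfrak^i \mapsto \pfrak'(\zeta)^i X^i$, which is nonzero. A five-lemma induction applied to the short exact sequences $0 \to \pfrak^{n-1}/\pfrak^n \to A_\pfrak/\pfrak^n \to A_\pfrak/\pfrak^{n-1} \to 0$ and their $X$-adic counterparts then shows each $\hat\Phi_n$ is an isomorphism, and passing to the inverse limit gives that $\hat\Phi$ is an isomorphism. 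The only substantive input is the separability of $\pfrak$, which forces $\hat\Phi$ to send a uniformizer to a uniformizer; after that, the argument is routine manipulation of completions and filtrations, so I do not anticipate any serious obstacle.
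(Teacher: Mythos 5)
Your proof is correct and follows essentially the same route as the paper: identify $\ts{\zeta}\circ\chi_t$ on $A$ with substitution $t\mapsto\zeta+X$, note that $\pfrak$ maps to a uniformizer of $\Fq(\zeta)\ps{X}$ (using $\pfrak'(\zeta)\neq 0$, which the paper records by computing $\ts{\zeta}(\pfrak(t))=\pfrak'(\zeta)X+\cdots+X^d$), extend by continuity/completeness, and conclude the extension is an isomorphism from the matching of uniformizers and residue fields. The paper simply cites the standard fact about complete discrete valuation rings at the last step, whereas you unpack it via the graded pieces and a five-lemma induction; that is a harmless expansion of the same argument, not a different approach.
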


\begin{proof}
For all $a\in A$, $\ts{\zeta}(a(t))=a(\zeta+X)\in  \Fq(\zeta)[X]\subseteq \Fq(\zeta)\ps{X}\subseteq \CI\ps{X}$. In particular,
\begin{eqnarray*}
\ts{\zeta}(\chi_t(\theta)) &=& \zeta+X \quad \text{and} \\
\ts{\zeta}(\pfrak(t)) &=& \pfrak(\zeta)+\pfrak^{(1)}(\zeta)X+\ldots + \pfrak^{(d)}(\zeta)X^d \\
&=& \pfrak'(\zeta)X+\ldots + X^d.
\end{eqnarray*}
Since the image of $\pfrak$ lies in the maximal ideal $X\Fq(\zeta)\ps{X}$, the map $\ts{\zeta}\circ \chi_t$ extends to the $\pfrak$-adic completion $A_\pfrak$.
Even more,  this extension is an isomorphism
$\ts{\zeta}\circ \chi_t:A_\pfrak\to \Fq(\zeta)\ps{X}$, as the image of $\pfrak$ generates the maximal ideal, and also the residue fields are isomorphic.
\end{proof}

For later use, we will also extend the Frobenius twisting $\tau$ to $\CI\ps{X}$ and to $\TT\ps{X}$ linearly in $X$.
We observe the following commutation rules which are easily verified.

\begin{lem}\label{lem:Taylor-series-and-twisting}
\begin{enumerate}
\item For all $h\in \TT$:\hspace*{2.1cm} $(\ts{}(h))^\tau=\ts{}(h^\tau)$,
\item for all $h\in \TT$, and $z\in \CI$ with $|z|\leq 1$:\hspace*{3mm} $(\ts{z}(h))^\tau=\ts{z^q}(h^\tau)$,
\item for all $a\in A_\pfrak$, and $k=1,\ldots, d$:\hspace*{1.4cm}  $(\ts{\zeta_k}(a(t)))^\tau=\ts{\zeta_k^q}(a(t))$.
\end{enumerate}
\end{lem}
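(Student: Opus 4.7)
The plan is to verify the three commutation identities in turn, using that hyperdifferentiation commutes with Frobenius on $\TT$ (already noted in the text), and that for $h \in \TT$ and $z \in \CI$ with $|z|\leq 1$ the evaluation satisfies the pointwise identity $h(z)^q = h^\tau(z^q)$.

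For (1), I would expand both sides coefficient by coefficient in $X$. The $n$-th coefficient of $(\ts{}(h))^\tau$ is $(h^{(n)})^\tau$ (since $X^\tau = X$), while the $n$-th coefficient of $\ts{}(h^\tau)$ is $(h^\tau)^{(n)}$. These agree by the commutation of $\tau$ and $\hde{n}$ on $\TT$: the binomial coefficient $\binom{i}{n}\in\FF_p$ is fixed by $\tau$ and $\tau$ acts trivially on $t$.

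For (2), I would combine (1) with the elementary identity $h(z)^q = h^\tau(z^q)$, valid for $h=\sum c_it^i\in\TT$ and $|z|\leq 1$. Indeed,
\[
(\ts{z}(h))^\tau = \sum_{n\geq 0}\tau\bigl(h^{(n)}(z)\bigr)X^n = \sum_{n\geq 0}(h^{(n)})^\tau(z^q)X^n = \sum_{n\geq 0}(h^\tau)^{(n)}(z^q)X^n = \ts{z^q}(h^\tau),
\]
where the penultimate equality uses (1), i.e.\ the commutation of $\tau$ and hyperderivatives.

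For (3) with $a\in A$, the element $h:=a(t)=\chi_t(a)\in\TT$ has coefficients in $\Fq$, so $a(t)^\tau=a(t)$, and part (2) gives the claim immediately. To pass to $a\in A_\pfrak$, I would invoke Lemma \ref{lem:D_zeta-isomorphism}: both maps $\ts{\zeta_k}\circ\chi_t$ and $\ts{\zeta_k^q}\circ\chi_t$ extend continuously to ring homomorphisms $A_\pfrak \to \Fq(\zeta)\ps{X}$ (here $\Fq(\zeta_k)=\Fq(\zeta_k^q)=\Fq(\zeta)$), and the $\tau$-action is continuous for the $X$-adic topology on $\Fq(\zeta)\ps{X}$. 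Since the identity holds on the dense subring $A\subset A_\pfrak$, it extends by continuity.

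I do not foresee any genuine obstacle; the author's own remark that the rules are ``easily verified'' is accurate. The only mildly subtle point is the density/continuity argument closing part (3), which relies cleanly on the isomorphism established in Lemma \ref{lem:D_zeta-isomorphism}.
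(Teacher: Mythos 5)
Your proof is correct and supplies exactly the routine verification the paper leaves implicit (the authors simply state these rules are ``easily verified''). Parts (1) and (2) are the natural coefficient-by-coefficient checks using the already-noted commutation of $\tau$ with $\hde{n}$, and your passage from $A$ to $A_\pfrak$ in (3) via continuity and Lemma \ref{lem:D_zeta-isomorphism} is the right way to close the gap; the one detail you correctly used without belaboring is that $\zeta_k^q$ is again a root of $\pfrak$, so $\ts{\zeta_k^q}\circ\chi_t$ is also $\pfrak$-adically continuous.
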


From $\ts{}$ and $\ts{z}$, we obtain the following family (in $n \geq 1$) of representations of $\CI$-algebras:
\begin{equation} \label{rhondef} \rho^{[n]} : \TT \rightarrow \TT^{n \times n}, \text{ defined by } h \mapsto \left( \begin{matrix} h & h^{(1)} & \cdots & h^{(n-1)} \\ 0 & h & \ddots & \vdots \\ \vdots & \ddots & \ddots & h^{(1)} \\ 0 & \cdots & 0 & h \end{matrix} \right),
\end{equation}
arising from the map $\ts{}$ by evaluation of $X$ at the obvious $n \times n$ nilpotent matrix, as well as
 $\rho_z^{[n]} : \TT \rightarrow \CI^{n \times n}$ for the $\CI$-algebra map
\begin{equation} \label{rhonevdef}
h \mapsto \rho^{[n]}(h)|_{t = z}. 
\end{equation}
Clearly, if $h(z) \neq 0$, then $\rho^{[n]}_z(h)$ is invertible.

\subsection{Anderson Generating Functions}\label{omegasection}

Throughout this article (except the last section), we will fix a Drinfeld module $\phi$ of rank $r$ defined over the extension $F$ of $K=\Fq(\theta)$. It is determined by 
\[\theta \mapsto \phi_\theta := \theta + g_1 \tau + \cdots + g_r \tau^r\] with $g_1,\ldots, g_r\in F$, $g_r \neq 0$. The exponential function of $\phi$ is denoted by $\exp_\phi$, and the period lattice $\Lambda_\phi=\ker(\exp_\phi)$ of $\phi$ is an $A$-lattice in $\CI$ of rank $r$. We also fix a basis $\{z_1,\ldots, z_r\}$ of this lattice.

The \textit{Anderson generating functions} corresponding to the $z_j$ are
\[ \omega_j(t) := \sum_{i \geq 0} \exp_\phi\Big(\frac{z_j}{\theta^{i+1}}\Big) t^i \in \TT,\]
and they generate the $\Fq[t]$-module of solutions to the equation
\[ Z^{\phi_\theta} = t \cdot Z \]
(see \cite[Rem.~6.3]{EP} or \cite[Thm.~3.6]{am:ptmsv}). One deduces that for all $a \in A$ and all $j=1,\ldots r$, one has
\begin{equation}  \label{omegacarlitztwisteq} \omega_j^{\phi_a} = a(t) \omega_j. 
\end{equation}

Now, the hyperderivatives commute with twisting. Thus, applying the $n$-th hyperderivative and using the Leibniz rule, we obtain
\begin{align} \label{omeganfnleqn} 
(\omega_j^{(n)})^{\phi_a} = (\omega_j^{\phi_a})^{(n)} = (a(t)\omega_j)^{(n)}=(a(t), a(t)^{(1)}, \dots, a(t)^{(n)} ) \cdot \left( \begin{matrix} \omega_j^{(n)} \\ \omega_j^{(n-1)} \\ \vdots \\ \omega_j  \end{matrix} \right ), \forall j = 1,\dots,r.  \end{align}
In fact, one may express \eqref{omeganfnleqn} more compactly with the representation $\rho^{[n+1]}$ of $\TT$ defined in \eqref{rhondef}.
From \eqref{omegacarlitztwisteq}, we obtain
\[ \phi_a \left( \rho^{[n+1]}(\omega_j)\right) =\rho^{[n+1]}(\omega_j^{\phi_a}) = \rho^{[n+1]}(a(t)\omega_j) = \rho^{[n+1]}(a(t))\rho^{[n+1]}(\omega_j), \forall j = 1,\dots,r.\]

\section{Explicit description of the coefficients of \boldmath$\omega_j^{(n)}(\zeta)$}
\label{sec:coefficients-are-torsion}

As above, let $\zeta \in \Fqac \subset \CI$ be a root of the monic irreducible polynomial $\pfrak \in A$ of degree $d$.
For $1 \leq j \leq r$ and $n \geq 0$, write
\begin{equation} \label{torsioncoeffseq}
\omega_j^{(n)}(\zeta) = \sum_{l = 0}^{d - 1} c_{j,(n),l} \zeta^{l}.
\end{equation}
for uniquely determined $c_{j,(n),l}\in \CI$ as in \eqref{TTrtof1eval}.

In \cite{MP},  it is shown that for the Anderson-Thakur function $\omega$, the corresponding coefficients are $\pfrak^{n+1}$-Carlitz torsion.
Here, we show the analogous result for the $c_{j,(n),l}$, and even stronger results.

\begin{prop}[{\bf Coefficients \boldmath$c_{j,(n),l}$ are torsion}] \label{prop:omega-n-integral} 
\ \\
For all $j\in \{1,\ldots, r\}$, $n\geq 0$ and $l\in \{ 0,1,\dots, d - 1\}$, the coefficients $c_{j,(n),l}$ of \eqref{torsioncoeffseq} are elements of the $\pfrak^{n+1}$-torsion $\phi[\pfrak^{n+1}]$.
\end{prop}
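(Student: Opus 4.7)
The plan is to apply $\phi_{\pfrak^{n+1}}$ to each coefficient $c_{j,(n),l}$ and show the result is zero, by combining the twist--evaluation compatibility of Lemma \ref{twistevallem} with the functional equation \eqref{omeganfnleqn} that $\omega_j$ satisfies under the $\phi$-action.

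First I would apply Lemma \ref{twistevallem}(2) with $\ffrak := \phi_{\pfrak^{n+1}} \in \CI\{\tau\}$ and $h := \omega_j^{(n)}$; since by definition $h|_{t=\zeta_k} = \sum_l c_{j,(n),l}\,\zeta_k^l$ for every $k$, we obtain
\[
\bigl(\omega_j^{(n)}\bigr)^{\phi_{\pfrak^{n+1}}}\Big|_{t = \zeta_k} \;=\; \sum_{l=0}^{d-1} \phi_{\pfrak^{n+1}}\!\bigl(c_{j,(n),l}\bigr)\, \zeta_k^l
\]
for every $k = 1,\dots,d$. By the uniqueness clause of Lemma \ref{twistevallem}(1), it therefore suffices to show that the left-hand side evaluates to zero at each $\zeta_k$.

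Next I would invoke the functional equation \eqref{omeganfnleqn} with $a := \pfrak^{n+1}$, which gives
\[
\bigl(\omega_j^{(n)}\bigr)^{\phi_{\pfrak^{n+1}}} \;=\; \sum_{i=0}^{n} \pfrak^{n+1}(t)^{(i)}\, \omega_j^{(n-i)}.
\]
The polynomial $\pfrak^{n+1}(t) = \pfrak(t)^{n+1}$ vanishes to order $n+1$ at $t = \zeta$, so its Taylor expansion at $\zeta$ begins with $X^{n+1}$; equivalently, $\pfrak^{n+1}(t)^{(i)}\bigl|_{t = \zeta} = 0$ for every $i = 0,1,\dots,n$. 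Hence every term of the sum vanishes at $t = \zeta$, and by applying the $q$-Frobenius the same holds at every Galois conjugate $\zeta_k$. Combining with the previous paragraph and the uniqueness, we conclude $\phi_{\pfrak^{n+1}}(c_{j,(n),l}) = 0$ for all admissible $j,n,l$, i.e.\ $c_{j,(n),l} \in \phi[\pfrak^{n+1}]$.

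There is no deep obstacle here: the proof is a clean combination of the two tools already set up in Section \ref{sec:generalities}. The only point requiring a moment's care is the vanishing $\pfrak^{n+1}(t)^{(i)}|_{t=\zeta} = 0$ for $0 \leq i \leq n$, which follows from the identification of the $i$-th hyperderivative at $\zeta$ with the $X^i$-coefficient of the Taylor expansion at $\zeta$ (see Lemma \ref{lem:D_zeta-isomorphism}) together with the fact that $\zeta$ is a simple root of $\pfrak$.
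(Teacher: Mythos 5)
Your proof is correct and follows essentially the same route as the paper's: apply Lemma~\ref{twistevallem}(2) with $\ffrak=\phi_{\pfrak^{n+1}}$, rewrite $(\omega_j^{(n)})^{\phi_{\pfrak^{n+1}}}=(\pfrak(t)^{n+1}\omega_j)^{(n)}$ via \eqref{omeganfnleqn}, observe that this vanishes at every $\zeta_k$ because $(\pfrak(t)^{n+1})^{(i)}$ is divisible by $\pfrak(t)$ for $0\le i\le n$, and conclude by the uniqueness in Lemma~\ref{twistevallem}(1). One small remark: your appeal to $\zeta$ being a \emph{simple} root is unnecessary — the vanishing $\bigl(\pfrak(t)^{n+1}\bigr)^{(i)}|_{t=\zeta}=0$ for $i\le n$ only requires $\pfrak(\zeta)=0$, since then $\ts{\zeta}(\pfrak(t))\in X\CI\ps{X}$ and hence $\ts{\zeta}(\pfrak(t)^{n+1})\in X^{n+1}\CI\ps{X}$.
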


\begin{proof}
The proof is identical to the one in the Carlitz case.\\
From Lemma \ref{twistevallem} and \eqref{omeganfnleqn} above, for each root $\zeta_k$ ($k=1,\ldots d$) of $\pfrak$, we see that
\[\sum_{l = 0}^{d - 1} \phi_{\pfrak^{n+1}}(c_{j,(n),l}) \zeta_k^{\,l} = (\omega_j^{(n)})^{\phi_{\pfrak^{n+1}}}|_{t = \zeta_k} =\left(\pfrak(t)^{n+1}\omega_j\right)^{(n)}|_{t = \zeta_k} = 0,\]
since $\left(\pfrak(t)^{n+1}\omega_j\right)^{(n)}$ is divisible by $\pfrak(t)$.
By uniqueness of the coefficients, we conclude $\phi_{\pfrak^{n+1}}(c_{j,(n),l}) = 0$, for all $l = 0,1,\dots, d - 1$.
\end{proof}

The coefficients can be described even more explicitly. 
The key ingredients for their computation are Pellarin's identities
\begin{equation} \label{eq:omega-as-exp}
 \omega_j(t)= \sum_{m\geq 0} \frac{z_j^{q^m} e_m}{(\theta^{q^m}-t)},
\end{equation}
where $\exp_\phi = \sum_{m \geq 0} e_m \tau^m \in \CI\{\!\{ \tau \}\!\}$.
(This identity is obtained from the definition of $\omega_j$ by a change of the summation.)

\begin{defn}
For each $n \geq 0$, let
\[\qfrak_{(n)}(\theta,T):= \sum_{j=1}^d \frac{1}{\pfrak'(\zeta_j)}\frac{\pfrak(\theta)^{n+1}}{(\theta-\zeta_j)^{n+1}} \frac{\pfrak(T)}{(T-\zeta_j)} = \sum_{l= 0}^{d-1} \qfrak_{(n),l}(\theta)T^l \in A[T] \]
be the unique polynomial of degree strictly less than $d = \deg \pfrak$ in $T$ interpolating the map $\zeta_k \mapsto \frac{\pfrak(\theta)^{n+1}}{(\theta-\zeta_k)^{n+1}}$.
\end{defn}

\begin{rem}
Although the coefficients $\qfrak_{(n),l}$ of $\qfrak_{(n)}(\theta,T)$ are a priori in $A[\zeta]$, it follows from the invariance of the polynomial $\qfrak_{(n)}(\theta,T)$ under permutation of the $\zeta_k$ that the coefficients are indeed in $A$.

Notice further, that for all $k=1,\ldots d$:
\[ \qfrak_{(n)}(\theta,\zeta_k)= \frac{\pfrak(\theta)^{n+1}}{(\theta-\zeta_k)^{n+1}}=\frac{(\pfrak(\theta)-\pfrak(\zeta_k))^{n+1}}{(\theta-\zeta_k)^{n+1}}
=\left( \frac{\pfrak(\theta)-\pfrak(T)}{\theta-T}\right)^{n+1}|_{T=\zeta_k}. \]
Hence, $ \qfrak_{(n)}(\theta,T)$ and $\left( \frac{\pfrak(\theta)-\pfrak(T)}{\theta-T}\right)^{n+1}$ have to be congruent modulo $\pfrak(T)$. This implies that $\qfrak_{(n)}(\theta,T)$ equals the remainder of $\left( \frac{\pfrak(\theta)-\pfrak(T)}{\theta-T}\right)^{n+1}$ under division by $\pfrak(T)$.

For $n=0$, the polynomial $ \frac{\pfrak(\theta)-\pfrak(T)}{\theta-T}$ already is of degree $d-1$, and hence equals $\qfrak_{(0)}(\theta,T)$. The short computation in \cite{MP}, shows that 
\[ \qfrak_{(0),l}=\sum_{k=l+1}^d \alpha_k\theta^{k-l-1}\in A, \]
where we have written $\pfrak=\sum_{k=0}^d \alpha_k\theta^k\in A$ with $\alpha_0,\ldots,\alpha_d\in \Fq$.
\end{rem}

\begin{prop} \label{explicitcoeffsprop}
Fix $1 \leq j \leq r$ and $n \geq 0$. For each $l = 0, \dots, d-1$, we have
 \[ c_{j,(n),l}=\phi_{\qfrak_{(n),l}}\left( \exp_\phi(\frac{z_j}{\pfrak^{n+1}})\right)= \exp_\phi\left(\frac{z_j \qfrak_{(n),l}}{\pfrak^{n+1}}\right). \]
\end{prop}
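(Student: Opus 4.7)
The plan is to compute $\omega_j^{(n)}(\zeta_k)$ explicitly via Pellarin's identity and then match the coefficients against the defining polynomial $\qfrak_{(n)}(\theta,T)$. First I would apply the $n$-th hyperderivative termwise to \eqref{eq:omega-as-exp}. Using the well-known identity $\partial_t^{(n)}\bigl(\tfrac{1}{\theta^{q^m}-t}\bigr)=\tfrac{1}{(\theta^{q^m}-t)^{n+1}}$ (which is immediate from the geometric series expansion $\sum_i t^i/\theta^{q^m(i+1)}$), this yields
\[ \omega_j^{(n)}(t)=\sum_{m\geq 0}\frac{z_j^{q^m}e_m}{(\theta^{q^m}-t)^{n+1}}, \]
and hence, evaluating at $\zeta_k$,
\[ \omega_j^{(n)}(\zeta_k)=\sum_{m\geq 0}\frac{z_j^{q^m}e_m}{(\theta^{q^m}-\zeta_k)^{n+1}}. \]

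Next I would rewrite the denominators using the interpolation property of $\qfrak_{(n)}$. The key observation is that the defining formula for $\qfrak_{(n)}(\theta,T)$ makes sense with $\theta$ replaced by the indeterminate $\theta^{q^m}$, and the same partial-fraction computation shows that
\[ \qfrak_{(n)}(\theta^{q^m},\zeta_k)=\frac{\pfrak(\theta^{q^m})^{n+1}}{(\theta^{q^m}-\zeta_k)^{n+1}}. \]
Since $\pfrak$ and each coefficient $\qfrak_{(n),l}$ lies in $A=\Fq[\theta]$, one has $\pfrak(\theta^{q^m})=\pfrak(\theta)^{q^m}$ and $\qfrak_{(n),l}(\theta^{q^m})=\qfrak_{(n),l}(\theta)^{q^m}$, so
\[ \frac{1}{(\theta^{q^m}-\zeta_k)^{n+1}}=\sum_{l=0}^{d-1}\frac{\qfrak_{(n),l}(\theta)^{q^m}}{\pfrak(\theta)^{(n+1)q^m}}\,\zeta_k^{\,l}. \]

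Substituting this back and interchanging the order of summation, I would obtain
\[ \omega_j^{(n)}(\zeta_k)=\sum_{l=0}^{d-1}\zeta_k^{\,l}\sum_{m\geq 0}e_m\left(\frac{z_j\,\qfrak_{(n),l}(\theta)}{\pfrak(\theta)^{n+1}}\right)^{\!q^m}=\sum_{l=0}^{d-1}\zeta_k^{\,l}\,\exp_\phi\!\left(\frac{z_j\,\qfrak_{(n),l}}{\pfrak^{n+1}}\right). \]
By the uniqueness of the expansion in Lemma~\ref{twistevallem}(1), comparing with \eqref{torsioncoeffseq} gives $c_{j,(n),l}=\exp_\phi(z_j\qfrak_{(n),l}/\pfrak^{n+1})$. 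The equality $\exp_\phi(z_j\qfrak_{(n),l}/\pfrak^{n+1})=\phi_{\qfrak_{(n),l}}(\exp_\phi(z_j/\pfrak^{n+1}))$ is the defining functional equation of the exponential, $\exp_\phi(a\cdot y)=\phi_a(\exp_\phi(y))$ for $a\in A$, which completes the proof.

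The only real technical point is justifying the interchange of the two sums and checking convergence, which follows from standard estimates on the $e_m$ ($|e_m|\to 0$ superexponentially) together with the fact that the $\zeta_k^{\,l}$ have absolute value one and $\qfrak_{(n),l}(\theta),\pfrak(\theta)\in A$; I would handle this briefly by working with partial sums. The ``main obstacle'' is really just the bookkeeping of the Frobenius twists in step two — making explicit that $\pfrak\in\Fq[\theta]$ allows us to pull the $q^m$-twist outside and thereby recognize the inner sum as the exponential series evaluated at $z_j\qfrak_{(n),l}/\pfrak^{n+1}$.
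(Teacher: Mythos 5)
Your proposal is correct and is essentially the same argument as the paper's: both hyperdifferentiate Pellarin's identity, use the interpolation property of $\qfrak_{(n)}$, pull the Frobenius twist through polynomials in $\Fq[\theta]$, and recognize the resulting series as $\exp_\phi$. The only cosmetic difference is that the paper assembles the Lagrange interpolation polynomial $c_{j,(n)}(T)$ and matches $T^l$-coefficients at the end, whereas you expand $1/(\theta^{q^m}-\zeta_k)^{n+1}$ in powers of $\zeta_k$ first and interchange sums — same computation, different bookkeeping order.
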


\begin{proof}

The $c_{j,(n),l}$ are the coefficients of the unique polynomial $c_{j,(n)}(T)\in \CI[T]$ satisfying
$\deg_T(c_{j,(n)}(T)) < d$ and $c_{j,(n)}(\zeta_k)=\omega_j^{(n)}(\zeta_k)$, for all $1\leq k\leq d$. This gives
\[ c_{j,(n)}(T)=\sum_{i=1}^d \omega_j^{(n)}(\zeta_i) \prod_{k\neq i}\frac{T-\zeta_k}{\zeta_i-\zeta_k}
= \sum_{i=1}^d \frac{\omega_j^{(n)}(\zeta_i)}{\pfrak'(\zeta_i)}\frac{\pfrak(T)}{(T-\zeta_i)}. \]
By hyperdifferentiating \eqref{eq:omega-as-exp} (with respect to $t$), one has
\[  \omega_j^{(n)}(t)= \sum_{m\geq 0} \frac{z_j^{q^m} e_m}{ (\theta^{q^m}-t)^{n+1}}, \]
and hence, making this replacement we compute that
\[  c_{j,(n)}(T) = \sum_{m\geq 0} \frac{z_j^{q^m}e_m}{ \pfrak(\theta^{q^m})^{n+1}} \qfrak_{(n)}(\theta^{q^m},T). \qedhere \]
\end{proof}

\begin{thm} \label{Fqbasisthm}
Let $n\geq 0$. The set of coefficients 
\[ \{ c_{j,(m),l} \mid 1\leq j\leq r; 0\leq m\leq n; 0\leq l\leq d-1 \} \]
forms an $\Fq$-basis of $\phi[\pfrak^{n+1}]$.   
\end{thm}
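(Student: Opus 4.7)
My plan is to reduce the statement to a basis assertion inside the ring $A/\pfrak^{n+1}$. First, the $\Fq$-dimension of $\phi[\pfrak^{n+1}]$ is $r(n+1)d$, since $\phi[\pfrak^{n+1}]$ is a free $A/\pfrak^{n+1}$-module of rank $r$, which matches the cardinality of the proposed set; so I only need to prove $\Fq$-linear independence.

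Writing $M_j := \{\exp_\phi(a z_j/\pfrak^{n+1}) : a \in A\}$ for the $A$-submodule of $\phi[\pfrak^{n+1}]$ generated by $\exp_\phi(z_j/\pfrak^{n+1})$, I have the decomposition $\phi[\pfrak^{n+1}] = \bigoplus_{j=1}^r M_j$, with each $M_j$ isomorphic to $A/\pfrak^{n+1}$ as an $A$-module via $\exp_\phi(a z_j/\pfrak^{n+1}) \leftrightarrow a \bmod \pfrak^{n+1}$. By Proposition~\ref{explicitcoeffsprop}, $c_{j,(m),l} \in M_j$ corresponds under this isomorphism to $\qfrak_{(m),l}\,\pfrak^{n-m} \bmod \pfrak^{n+1}$. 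Taking a direct sum over $j$, the theorem reduces to showing that $\{\qfrak_{(m),l}\,\pfrak^{n-m} : 0 \leq m \leq n,\ 0 \leq l \leq d-1\}$ is an $\Fq$-basis of $A/\pfrak^{n+1}$.

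For this, I will exploit the $\pfrak$-adic filtration of $A/\pfrak^{n+1}$: the graded pieces $\pfrak^s/\pfrak^{s+1}$ are each isomorphic to $\FF_{q^d}$ via $a\,\pfrak^s \mapsto a(\zeta)$, and the image of $\qfrak_{(m),l}\,\pfrak^{n-m}$ in the $(n-m)$-th graded piece is $\qfrak_{(m),l}(\zeta)$. By the standard lifting lemma for filtered vector spaces, it suffices to verify that for each fixed $m$ the set $\{\qfrak_{(m),l}(\zeta) : 0 \leq l \leq d-1\}$ is an $\Fq$-basis of $\FF_{q^d}$. I will do this by computing $\qfrak_{(m)}(\zeta, T) = \sum_l \qfrak_{(m),l}(\zeta) T^l$ in closed form: since $\pfrak(\zeta)=0$, the defining interpolation gives $\qfrak_{(m)}(\zeta,\zeta_k) = 0$ for $k \geq 2$, while at $T = \zeta$ a limit (using $\pfrak(\theta)/(\theta-\zeta) = \prod_{k\geq 2}(\theta-\zeta_k)$) yields $\pfrak'(\zeta)^{m+1}$. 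Lagrange interpolation in $T$ then gives
\[
\qfrak_{(m)}(\zeta, T) = \pfrak'(\zeta)^m \cdot \frac{\pfrak(T)}{T - \zeta}.
\]
Since $\pfrak'(\zeta) \in \FF_{q^d}^\times$, it remains to check that the coefficients of $\pfrak(T)/(T-\zeta)$, viewed in $\FF_{q^d}$, form an $\Fq$-basis. Polynomial division shows the coefficient of $T^{d-1-i}$ in $\pfrak(T)/(T-\zeta)$ equals $\zeta^i$ plus lower-degree terms in $\zeta$ with $\Fq$-coefficients, so the change-of-basis matrix to $\{1, \zeta, \ldots, \zeta^{d-1}\}$ is triangular with unit diagonal and hence invertible.

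The main obstacle is not any individual computation but the bookkeeping that stitches the filtration/lifting step to the closed form for $\qfrak_{(m)}(\zeta, T)$; once those two ingredients are in place, everything else is routine linear algebra.
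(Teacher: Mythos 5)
Your proof is correct and, in its final step, takes a genuinely different route from the paper's. Both arguments first translate the problem into a statement about the polynomials $\qfrak_{(m),l}$: after identifying $\phi[\pfrak^{n+1}]$ with $(A/\pfrak^{n+1})^r$ via the lattice basis, the theorem boils down to showing that for each $m$ the elements $\qfrak_{(m),0},\dots,\qfrak_{(m),d-1}$ map to an $\Fq$-basis of $A/\pfrak$. The paper gets this reduction by an induction that exhibits the $c_{j,(n),l}$ as a basis for a complement of $\phi[\pfrak^n]$ in $\phi[\pfrak^{n+1}]$; your filtered-vector-space lifting lemma does the same bookkeeping in one step, so this part is essentially equivalent.

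Where you diverge is in proving that key fact. The paper extends constants to $\Fq(\zeta)$, invokes the Chinese remainder theorem to identify $\Fq(\zeta)[\theta]/\pfrak$ with $\Fq(\zeta)^d$, and observes via a Vandermonde matrix that the $\Fq(\zeta)$-span of the $\qfrak_{(m),l}$ equals that of the $\pfrak(\theta)^{m+1}/(\theta-\zeta_k)^{m+1}$, which project to nonzero multiples of the standard basis vectors. You instead derive the closed form
\[
\qfrak_{(m)}(\zeta,T)\;=\;\pfrak'(\zeta)^{m}\,\frac{\pfrak(T)}{T-\zeta},
\]
and note that the coefficients of $\pfrak(T)/(T-\zeta)$ are related to $1,\zeta,\dots,\zeta^{d-1}$ by a unitriangular matrix, hence form an $\Fq$-basis of $\FF_{q^d}$; scaling by $\pfrak'(\zeta)^m\neq 0$ preserves the basis property. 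Your closed form for $\qfrak_{(m)}(\zeta,T)$ is a clean observation not made in the paper and gives a more computational proof of the basis claim, at the cost of the structural transparency the CRT picture provides (which, e.g., makes the roles of the different roots $\zeta_k$ explicit). Both are valid and of comparable length.
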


\begin{proof}
Inductively, it is enough to show that for any $n\geq 0$ the set $\{ c_{j,(n),l} \mid 1\leq j\leq r; 0\leq l\leq d-1 \}$ is an $\Fq$-basis of a complement of $\phi[\pfrak^{n}]$ inside 
$\phi[\pfrak^{n+1}]$. One already knows that
\[  \left\{ \exp_\phi\left(\frac{z_ja_k}{\pfrak^{n+1}}\right) \mid j=1,\ldots, r; k=1,\ldots, d \right\} \]
is such a basis, whenever the $\Fq$-span of $a_1,\ldots, a_d\in A$ surjects onto $A/\pfrak$ via the residue map $A\to A/\pfrak$.
Since $c_{j,(n),l}=\exp_\phi\left(\frac{z_j \qfrak_{(n),l}}{\pfrak^{n+1}}\right)$, we have to show that the $\Fq$-span of 
$\qfrak_{(n),0},\ldots ,\qfrak_{(n),d-1}\in A$ surjects onto $A/\pfrak$.

By extension of constants, this is the same as that the $\Fq(\zeta)$-span of the elements
$\qfrak_{(n),0},\ldots ,\qfrak_{(n),d-1}\in A(\zeta)=\Fq(\zeta)[\theta]$ surjects onto 
$\Fq(\zeta)[\theta]/\pfrak$. By the Chinese remainder theorem, the latter is isomorphic
$\Fq(\zeta)^d$ via $\bar{\theta}\mapsto (\zeta_1,\ldots, \zeta_d)$.

On the other hand, we have
\[ \begin{pmatrix} 1 & \zeta_1 & \zeta_1^2 & \cdots & \zeta_1^{d - 1} \\ 1 & \zeta_2 & \zeta_2^2 & \cdots & \zeta_2^{d - 1} \\ \vdots \\ 1 & \zeta_d & \zeta_d^2 & \cdots & \zeta_d^{d - 1} \end{pmatrix}  \begin{pmatrix} \qfrak_{(n),0}\\ \qfrak_{(n),1}\\ \vdots \\ \qfrak_{(n),d-1} \end{pmatrix} 
= \begin{pmatrix} \qfrak_{(n)}(\theta,\zeta_1) \\  \qfrak_{(n)}(\theta,\zeta_2)\\ \vdots \\  \qfrak_{(n)}(\theta,\zeta_d) \end{pmatrix}
= \begin{pmatrix} \frac{\pfrak(\theta)^{n+1}}{(\theta-\zeta_1)^{n+1}}\\
\frac{\pfrak(\theta)^{n+1}}{(\theta-\zeta_2)^{n+1}} \\ \vdots \\  
\frac{\pfrak(\theta)^{n+1}}{(\theta-\zeta_d)^{n+1}} \end{pmatrix}.
\]
So the  $\Fq(\zeta)$-span of 
$\qfrak_{(n),0},\ldots ,\qfrak_{(n),d-1}$ is the same as that of
$\frac{\pfrak(\theta)^{n+1}}{(\theta-\zeta_1)^{n+1}},\ldots, \frac{\pfrak(\theta)^{n+1}}{(\theta-\zeta_d)^{n+1}}$.
Finally, via the projection $\Fq(\zeta)[\theta]\to \Fq(\zeta)^d$ above, $\frac{\pfrak(\theta)^{n+1}}{(\theta-\zeta_k)^{n+1}}$ is mapped to the vector
$(0,\ldots, 0, \pfrak'(\zeta_k)^{n+1},0,\ldots, 0)$, i.e.~a nonzero multiple of the $k$-th
standard basis vector. Hence, the  $\Fq(\zeta)$-span of these elements surjects onto
$\Fq(\zeta)^d$.
\end{proof}

As a corollary, we get the non-vanishing of these coefficients.

\begin{cor}[{\bf Non-vanishing of torsion coefficients of \boldmath$\omega_j^{(n)}(\zeta)$}] \label{nonvantorsioncoeffcor}
Let $c_{j,(n),l}$ be as defined in \eqref{torsioncoeffseq} for $\omega_j^{(n)}(\zeta)$, for $n\geq 0$, $\,j=1,\ldots, r$, and $l = 0, 1,\dots, d-1$. We have
\[ c_{j,(n),l} \in \phi[\pfrak^{n+1}] \setminus \phi[\pfrak^{n}], \quad \forall l = 0,1,\dots,d-1. \]
In particular, these coefficients are non-zero for all $n \geq 0$, all $j=1,\ldots, r$, and all $l = 0,1,\dots,d-1$.
\end{cor}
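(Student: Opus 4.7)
The plan is to deduce this as a direct consequence of Theorem \ref{Fqbasisthm} together with Proposition \ref{explicitcoeffsprop}. There are two natural routes; either gives the result quickly.

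The cleanest route uses only the basis statement. By Theorem \ref{Fqbasisthm}, the set
\[ B_{n+1} := \{ c_{j,(m),l} \mid 1\leq j\leq r;\, 0\leq m\leq n;\, 0\leq l\leq d-1 \} \]
is an $\Fq$-basis of $\phi[\pfrak^{n+1}]$, while $B_n$ (with $m$ running only up to $n-1$) is an $\Fq$-basis of $\phi[\pfrak^n]$. Hence the complement $B_{n+1} \setminus B_n = \{ c_{j,(n),l} \mid 1\leq j\leq r;\, 0\leq l\leq d-1 \}$ projects to an $\Fq$-basis of the quotient $\phi[\pfrak^{n+1}]/\phi[\pfrak^n]$, which in particular forces each $c_{j,(n),l}$ to have nonzero image there. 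This is exactly the claim $c_{j,(n),l} \in \phi[\pfrak^{n+1}]\setminus \phi[\pfrak^n]$, and the non-vanishing ($c_{j,(n),l}\neq 0$) is the special case $n=0$ together with the fact that $\phi[\pfrak^0]=\phi[1]=\{0\}$.

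Alternatively, one can argue directly from Proposition \ref{explicitcoeffsprop}. Write $c_{j,(n),l}=\exp_\phi(z_j\qfrak_{(n),l}/\pfrak^{n+1})$. Then $c_{j,(n),l}\in \phi[\pfrak^n]$ is equivalent to $\exp_\phi(z_j\qfrak_{(n),l}/\pfrak)=0$, i.e.\ to $z_j\qfrak_{(n),l}/\pfrak\in\Lambda_\phi$. Since $\{z_1,\ldots,z_r\}$ is an $A$-basis of $\Lambda_\phi$, this is in turn equivalent to $\pfrak\mid \qfrak_{(n),l}$ in $A$. But the calculation in the proof of Theorem \ref{Fqbasisthm} shows that $\qfrak_{(n),0},\ldots,\qfrak_{(n),d-1}$ are linearly independent modulo $\pfrak$ (the Vandermonde computation exhibits their images in $\Fq(\zeta)[\theta]/\pfrak\cong \Fq(\zeta)^d$ as nonzero scalar multiples of the standard basis vectors). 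In particular, none of them is divisible by $\pfrak$, giving the result.

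I would write the corollary using the first route, as it is a one-line deduction from the basis statement and makes no further use of the explicit formula. There is no real obstacle here: all the substantive content is already carried out in Theorem \ref{Fqbasisthm}, and the corollary is essentially a bookkeeping statement about complements of subspaces.
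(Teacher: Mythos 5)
Your first route is exactly what the paper intends: the corollary is stated without proof, as an immediate consequence of Theorem \ref{Fqbasisthm}, and your linear-algebra observation (a basis of $\phi[\pfrak^{n+1}]$ containing a basis of $\phi[\pfrak^n]$ has its complement projecting to a basis of the quotient) is the implicit argument. Your alternative route via Proposition \ref{explicitcoeffsprop} and $\pfrak \nmid \qfrak_{(n),l}$ is also correct and self-contained, but it is not needed; the only slight infelicity is the phrase ``the special case $n=0$'' for the non-vanishing, since $c_{j,(n),l}\neq 0$ follows for every $n$ from $c_{j,(n),l}\notin\phi[\pfrak^n]\ni 0$, not merely from the $n=0$ instance.
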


Furthermore, we obtain that the values $\omega_j^{(n)}(\zeta_k)$ generate the $\pfrak^{n+1}$-torsion extension above $F(\zeta)$.

\begin{cor}\label{cor:generation-by-omega-n-zeta-k}
For all $n\geq 0$,
\[  F(\zeta, \omega_j^{(n)}(\zeta_k)| 1\leq j\leq r;\, 1\leq k\leq d) = F_n(\zeta)
= F(\zeta,\phi[\pfrak^{n+1}]), \]
where $F_n:=F(\phi[\pfrak^{n+1}])$ denotes the $\pfrak^{n+1}$-torsion extension of $F$.
\end{cor}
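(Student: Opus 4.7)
The strategy is to convert the claim into a statement about $A$-submodules of $\phi[\pfrak^{n+1}]$, and then to exploit the surjectivity of $\phi_\pfrak$ on torsion. The rightmost equality $F_n(\zeta)=F(\zeta,\phi[\pfrak^{n+1}])$ is immediate from the definition $F_n:=F(\phi[\pfrak^{n+1}])$, so only the leftmost equality requires argument. By Lemma~\ref{twistevallem}(1), for each fixed $n$ the same coefficients $c_{j,(n),l}$ appear in the expansion $\omega_j^{(n)}(\zeta_k)=\sum_{l=0}^{d-1}c_{j,(n),l}\zeta_k^l$ for every $k=1,\dots,d$. Since the Vandermonde matrix $(\zeta_k^l)_{k,l}$ is invertible over $\Fq(\zeta)\subseteq F(\zeta)$, passage between the values $\omega_j^{(n)}(\zeta_k)$ and the coefficients $c_{j,(n),l}$ is an $F(\zeta)$-linear isomorphism, giving
\[ F(\zeta,\omega_j^{(n)}(\zeta_k)\mid j,k)=F(\zeta,c_{j,(n),l}\mid j,l). \]
The inclusion of this common field in $F(\zeta,\phi[\pfrak^{n+1}])$ is then immediate from Proposition~\ref{prop:omega-n-integral}.

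For the reverse inclusion, set $L:=F(\zeta,c_{j,(n),l}\mid j,l)$. Because every $\phi_a$ with $a\in A$ is a polynomial in the Frobenius with coefficients in $F\subseteq L$, the field $L$ is stable under $\phi_a$; hence $L$ contains the $A$-submodule $M\subseteq\phi[\pfrak^{n+1}]$ generated by the $c_{j,(n),l}$'s, and the claim reduces to showing $M=\phi[\pfrak^{n+1}]$. This is the main substantive step: Theorem~\ref{Fqbasisthm} only tells us that the top layer $\{c_{j,(n),l}\}$ spans an $\Fq$-complement of $\phi[\pfrak^n]$ in $\phi[\pfrak^{n+1}]$, so one must explain why the remaining layers are recovered from $M$ via the $A$-action.

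To finish, I would use that $\phi_\pfrak:\phi[\pfrak^{n+1}]\twoheadrightarrow\phi[\pfrak^n]$ is surjective---this comes from the $A$-module identification $\phi[\pfrak^{n+1}]\cong(A/\pfrak^{n+1})^r$, on which $\phi_\pfrak$ acts as multiplication by $\pfrak$---and that $M$ is $\phi_\pfrak$-stable. Applying $\phi_\pfrak$ to the equality $M+\phi[\pfrak^n]=\phi[\pfrak^{n+1}]$ supplied by Theorem~\ref{Fqbasisthm} yields
\[ \phi[\pfrak^n]=\phi_\pfrak(M)+\phi[\pfrak^{n-1}]\subseteq M+\phi[\pfrak^{n-1}]. \]
Iterating produces $\phi[\pfrak^{n+1-i}]\subseteq M+\phi[\pfrak^{n-i}]$ for each $0\leq i\leq n$, and telescoping these inclusions collapses to $\phi[\pfrak^{n+1}]\subseteq M$, giving $M=\phi[\pfrak^{n+1}]$ as required. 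The point demanding care is the conceptual surprise that the single hyperderivative order $n$ already suffices once the $\phi_a$-action is permitted, even though Theorem~\ref{Fqbasisthm} uses all orders $m\leq n$ to write down an $\Fq$-basis.
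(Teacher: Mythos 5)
Your proof is correct, and it is commendable that you explicitly flagged the one step that is not an immediate consequence of Theorem~\ref{Fqbasisthm}: the corollary asserts that the \emph{single} hyperderivative order $n$ already generates $F_n(\zeta)$, whereas Theorem~\ref{Fqbasisthm} only directly hands you an $\Fq$-basis built from all orders $0\leq m\leq n$. The paper treats the corollary as a direct consequence and supplies no written proof, so there is nothing to compare line by line; the implicit argument is presumably the observation you supplied, namely that the top layer $\{c_{j,(n),l}\}$ generates the full torsion module as an $A$-module. Your downward induction via $\phi_\pfrak$-surjectivity is a perfectly valid way to see this; it is essentially Nakayama's lemma in disguise. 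Since $\phi[\pfrak^{n+1}]$ is a finitely generated module over the local ring $A/\pfrak^{n+1}$ with maximal ideal $(\pfrak)$ and $\pfrak\cdot\phi[\pfrak^{n+1}]=\phi_\pfrak(\phi[\pfrak^{n+1}])=\phi[\pfrak^n]$, the equality $M+\phi[\pfrak^n]=\phi[\pfrak^{n+1}]$ furnished by Theorem~\ref{Fqbasisthm} says exactly that $M$ generates modulo the maximal ideal, whence $M=\phi[\pfrak^{n+1}]$ in one stroke. An equally short alternative route, closer to the explicit formulas of Proposition~\ref{explicitcoeffsprop}, is to note that $c_{j,(n),l}=\exp_\phi(z_j\qfrak_{(n),l}/\pfrak^{n+1})$, that the $A$-module generated by the $c_{j,(n),l}$ for fixed $j$ is $\{\exp_\phi(z_j b/\pfrak^{n+1}):b\in I\}$ where $I\subseteq A$ is the ideal generated by $\qfrak_{(n),0},\dots,\qfrak_{(n),d-1}$, and that $I$ is coprime to $\pfrak$ (Theorem~\ref{Fqbasisthm} in effect shows $I$ surjects onto $A/\pfrak$), so $I$ surjects onto $A/\pfrak^{n+1}$ and the module is all of $\phi[\pfrak^{n+1}]$. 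Either way, your Vandermonde step, the $\phi_a$-stability of the field $L$, and the conclusion are all sound.
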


\begin{rem}
In the Carlitz case, the theorem of Angl\`es-Pellarin was even stronger: They only needed the evaluation of the Anderson-Thakur function (resp.~its $n$-th hyperderivative) at one of the $\zeta_k$ for generating the field extension $F_n(\zeta)/F(\zeta)$. We conjecture that also in the general case, we have
\[   F_n(\zeta)=F(\zeta, \omega_j^{(n)}(\zeta)| 1\leq j\leq r). \]
However, a proof would be much more complicated than in the Carlitz case.
\end{rem}

\begin{rem}
If the Drinfeld module $\phi$ is defined over the ring of integers $O_F$ of $F$, and has everywhere good reduction (i.e.~the coefficients of $\phi_\theta$ lie in $O_F$, and the highest coefficient is  in $O_F^\times$), then all elements in $\phi[\pfrak^{n+1}]$ are integral over $O_F$, and hence also all $\omega_j^{(n)}(\zeta_k)$ are integral elements. 
\end{rem}

\section{The Galois Action}\label{sec:galois-action}

We still assume that the Drinfeld module $\phi$ has rank $r$ and is defined over the extension $F$ of $K$, with associated lattice 
\[z_1A + \cdots +z_r A \subset \CI.\] 
We have seen that for all $0 \leq m \leq n$, $1 \leq j \leq r$, and $1\leq k\leq d$ the elements $\omega_j^{(m)}(\zeta_k)$ lie in the extension $F(\zeta)(\phi[\pfrak^{n+1}])$ which is a Galois extension of $F$. The next result shows that there is a very clean description of the action of Galois on the vector
\[ X_\zeta^{[n+1]} := (\omega_1^{(n)}, \dots, \omega_1^{(1)},\omega_1,\dots,\omega_r^{(n)}, \dots, \omega_r^{(1)},\omega_r)_{t = \zeta}^{tr} \]

For this, we employ the Galois representation on the $\pfrak$-adic Tate-module $T_\pfrak(\phi)$ with respect to the basis given by the sequences $(\exp_\phi(z_1/\pfrak^{n+1}))_{n\geq 0}$ up to\linebreak $(\exp_\phi(z_r/\pfrak^{n+1}))_{n\geq 0}$. Hence, this representation 
\begin{equation}\label{eq:galois-rep}
 \varphi_\pfrak: \Gal(F^{\rm sep}/F) \twoheadrightarrow \Gal(F(\phi[\pfrak^{\infty}])/F) \to \GL_r(A_\pfrak), \sigma \mapsto \mathcal{A}_\sigma 
\end{equation}
is given such that
\[ \sigma(\exp_\phi(z_j/\pfrak^{n+1})) = \exp_{\phi}(\sum_{k = 1}^r a_{\sigma,j,k} z_k / \pfrak^{n+1}), \]
where $a_{\sigma,j,k}$ is the $(j,k)$-th coefficient of $\mathcal{A}_\sigma$ (modulo $\pfrak^{n+1}$).

\begin{lem}[{\bf Galois action on \boldmath$\omega_j^{(n)}(\zeta)$}] \label{galoisactionlem} \

Let $\sigma\in \Gal(F^{\rm sep}/F)$, and $\mathcal{A}_\sigma=\varphi_\pfrak(\sigma)$ as above
 with entries $a_{\sigma,j,i}\in A_\pfrak$. Further, let 
 \[ X_\zeta^{[n+1]} := (\omega_1^{(n)}, \dots, \omega_1^{(1)},\omega_1,\dots,\omega_r^{(n)}, \dots, \omega_r^{(1)},\omega_r)_{t = \zeta}^{tr}. \]

 Then in block matrix form
\[ \sigma\left( X_{\zeta}^{[n+1]}\right) = \Bigl(\rho_{\sigma(\zeta)}^{[n+1]}\bigl(a_{\sigma,j,i}\bigr)\Bigr)_{1 \leq j,i \leq r}X_{\sigma(\zeta)}^{[n+1]}. \]

\end{lem}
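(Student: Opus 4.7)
The plan is to reduce the block-matrix identity to a coordinate-wise statement and verify it using the explicit description of $c_{j,(m),l}$ from Proposition~\ref{explicitcoeffsprop}, Lemma~\ref{twistevallem}, and the hyperdifferentiated functional equation~\eqref{omeganfnleqn}. Since $\rho_{\sigma(\zeta)}^{[n+1]}(h)$ is upper triangular with $h^{(k)}(\sigma(\zeta))$ on its $k$-th superdiagonal, the entry of the right-hand side column corresponding to the position of $\omega_j^{(m)}(\zeta)$ in $X_\zeta^{[n+1]}$ equals, by the Leibniz rule, precisely $\sum_{i=1}^r \bigl(a_{\sigma,j,i}(t)\,\omega_i\bigr)^{(m)}\big|_{t=\sigma(\zeta)}$. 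So the lemma reduces to proving
\[ \sigma\bigl(\omega_j^{(m)}(\zeta)\bigr)\;=\;\sum_{i=1}^r \bigl(a_{\sigma,j,i}(t)\,\omega_i\bigr)^{(m)}\Big|_{t=\sigma(\zeta)}\qquad (1\le j\le r,\ 0\le m\le n). \]

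To prove this identity, I would fix representatives $\bar a_{\sigma,j,i}\in A$ of $a_{\sigma,j,i}\in A_\pfrak$ modulo $\pfrak^{n+1}$. Using $\exp_\phi(z_j/\pfrak^{m+1})=\phi_{\pfrak^{n-m}}(\exp_\phi(z_j/\pfrak^{n+1}))$ together with~\eqref{eq:galois-rep} yields $\sigma(\exp_\phi(z_j/\pfrak^{m+1}))=\sum_i \phi_{\bar a_{\sigma,j,i}}(\exp_\phi(z_i/\pfrak^{m+1}))$ for every $0\le m\le n$. Combining this with the formula $c_{j,(m),l}=\phi_{\qfrak_{(m),l}}(\exp_\phi(z_j/\pfrak^{m+1}))$ of Proposition~\ref{explicitcoeffsprop}, and using that $\phi$ is a ring homomorphism (so $\phi_{\qfrak_{(m),l}}$ commutes with $\phi_{\bar a_{\sigma,j,i}}$), one obtains $\sigma(c_{j,(m),l})=\sum_i\phi_{\bar a_{\sigma,j,i}}(c_{i,(m),l})$. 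Substituting into $\sigma(\omega_j^{(m)}(\zeta))=\sum_{l=0}^{d-1}\sigma(c_{j,(m),l})\,\sigma(\zeta)^l$ and applying Lemma~\ref{twistevallem}(2) in reverse (collecting the sum into a single evaluation of a twist) rewrites the right-hand side as $\sum_i (\omega_i^{(m)})^{\phi_{\bar a_{\sigma,j,i}}}\big|_{t=\sigma(\zeta)}$, which by~\eqref{omeganfnleqn} equals $\sum_i (\bar a_{\sigma,j,i}(t)\,\omega_i)^{(m)}\big|_{t=\sigma(\zeta)}$.

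The remaining step is to replace $\bar a_{\sigma,j,i}$ by $a_{\sigma,j,i}\in A_\pfrak$ inside the Taylor evaluation. This follows from Lemma~\ref{lem:D_zeta-isomorphism}: since $\pfrak(t)^{m+1}$ has all Taylor coefficients at $\sigma(\zeta)$ of order $\le m$ equal to zero, the values $\bar a_{\sigma,j,i}^{(k)}(\sigma(\zeta))$ for $0\le k\le m$ depend only on $a_{\sigma,j,i}\bmod\pfrak^{m+1}$ and coincide with the Taylor coefficients used to define $\rho_{\sigma(\zeta)}^{[n+1]}(a_{\sigma,j,i})$. I expect the most delicate step to be verifying that a single representative $\bar a_{\sigma,j,i}$ modulo $\pfrak^{n+1}$ correctly governs the Galois action on $\exp_\phi(z_j/\pfrak^{m+1})$ for every intermediate level $m+1\le n+1$ simultaneously, so that the truncated Taylor matrix $\rho_{\sigma(\zeta)}^{[n+1]}(a_{\sigma,j,i})$ genuinely captures the Galois action on the whole tower at once; once this is settled, the coordinate-wise identity assembles directly into the claimed block-matrix formula.
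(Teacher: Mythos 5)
Your proof is correct and follows essentially the same route as the paper: start from the explicit formula $c_{j,(m),l}=\phi_{\qfrak_{(m),l}}(\exp_\phi(z_j/\pfrak^{m+1}))$ of Proposition~\ref{explicitcoeffsprop}, apply $\sigma$ using the Tate-module description of $\varphi_\pfrak$, regroup with Lemma~\ref{twistevallem}(2), and convert to hyperderivatives via~\eqref{omeganfnleqn}. The only difference is that you are more explicit than the paper about the representative issue --- choosing $\bar a_{\sigma,j,i}\in A$ lifting $a_{\sigma,j,i}\bmod\pfrak^{n+1}$ and observing via Lemma~\ref{lem:D_zeta-isomorphism} that the Taylor coefficients $a^{(k)}(\sigma(\zeta))$ for $k\le m$ depend only on $a\bmod\pfrak^{m+1}$ --- a point the paper treats implicitly by writing $\phi_{a_{\sigma,j,k}}$ and $\chi_t(a_{\sigma,j,k})$ directly for $a_{\sigma,j,k}\in A_\pfrak$.
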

\begin{proof}

Using the explicit coefficients obtained in Proposition \ref{explicitcoeffsprop}, for any $0 \leq m \leq n$, as well as Lemma \ref{twistevallem}(2) and Equation \eqref{omeganfnleqn}, we obtain
\begin{eqnarray*}
\sigma(\omega_j^{(m)}(\zeta)) &=& \sum_{i = 0}^{d-1} \exp_\phi\left(\frac{\sum_{k = 1}^r a_{\sigma,j,k} z_k \qfrak_{(m),i}}{\pfrak^{m+1}}\right) \sigma(\zeta)^i\\
&=& \sum_{k = 1}^r  \sum_{i = 0}^{d-1} \phi_{a_{\sigma,j,k}}
\left(\exp_\phi\Bigl( \frac{z_k \qfrak_{(m),i}}{\pfrak^{m+1}}\Bigr) \right) \sigma(\zeta)^i\\
& =& \sum_{k = 1}^r (\omega_k^{(m)})^{\phi_{a_{\sigma,j,k}}}|_{t = \sigma(\zeta)}= \sum_{k = 1}^r (\chi_t(a_{\sigma,j,k})\omega_k)^{(m)}(\sigma(\zeta)). 
\end{eqnarray*} 
 
Putting this in matrix form, we deduce the claim. 
\end{proof}

We can say the previous thing even cleaner. For this, we consider the ring homomorphisms
\[  \ts{\zeta}:\TT\to  \CI\ps{X}, h\mapsto  \mathcal{D}(h)|_{t=\zeta}=  \sum_{n=0}^\infty h^{(n)}(\zeta)X^n \]
defined earlier. We also consider $a\mapsto \ts{\zeta}(\chi_t(a))=\ts{\zeta}(a(t))$ for $a\in A$, as well as its $\pfrak$-adically continuous 
extension to $A_\pfrak$ which induces an isomorphism to $\Fq(\zeta)\ps{X}$ as shown in Lemma \ref{lem:D_zeta-isomorphism}. However, we will still write the image of the composition as $\ts{\zeta}(a)$.
We let the Galois group $\Gal(F^{\rm sep}/F)$ act on these power series by acting on the coefficients -- as long as the coefficients lie in $F^{\rm sep}$.

\pagebreak

\begin{prop}\label{galoisactionprop}
 For each $\sigma \in \Gal(F^{\rm sep}/F)$, let $\mathcal{A}_\sigma=\varphi_\pfrak(\sigma) \in \GL_r(A_\pfrak)$. Then 
\[  \sigma \bigl( \ts{\zeta} \begin{pmatrix}
\omega_1 \\ \vdots \\ \omega_r \end{pmatrix} \bigr) =
\ts{\sigma(\zeta)}(\mathcal{A}_\sigma)\cdot 
\ts{\sigma(\zeta)} \begin{pmatrix}
\omega_1 \\ \vdots \\ \omega_r \end{pmatrix}\!. \,
  \footnote{As usual, we apply the maps $\ts{\zeta}$ and $\ts{\sigma(\zeta)}$ entry-wise.}
 \]
\end{prop}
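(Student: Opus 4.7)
The plan is to upgrade Lemma~\ref{galoisactionlem} from an identity on finite-order jets to one on full Taylor series, by reading both sides of that lemma as reductions modulo $X^{n+1}$ of the claimed identity and then passing to the projective limit in $n$. No new analytic input is required; everything will be a translation exercise between the matrix formalism $\rho_z^{[n+1]}$/$X_z^{[n+1]}$ and the power-series formalism $\ts{z}$.

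First I would make precise the dictionary between these two formalisms. The $\CI$-algebra map $\rho_z^{[n+1]}$ of \eqref{rhonevdef} is exactly the matrix (in the basis $X^n,X^{n-1},\dots,1$) of multiplication-by-$\ts{z}(h)$ on the $\CI$-algebra $\CI[X]/(X^{n+1})$; and the column vector $(\omega_j^{(n)},\dots,\omega_j)_{t=z}^{tr}$ records precisely the coefficients of $\ts{z}(\omega_j)\bmod X^{n+1}$. The Leibniz identity \eqref{omeganfnleqn} is, under this dictionary, the statement that $\rho_z^{[n+1]}(h)\cdot(\omega_j^{(n)},\dots,\omega_j)_{t=z}^{tr}$ represents $\ts{z}(h\omega_j)\bmod X^{n+1}$. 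Applying this block by block, Lemma~\ref{galoisactionlem} becomes the congruence
\[\sigma\!\begin{pmatrix}\ts{\zeta}(\omega_1)\\\vdots\\\ts{\zeta}(\omega_r)\end{pmatrix}\equiv \bigl(\ts{\sigma(\zeta)}(a_{\sigma,j,i})\bigr)_{1\le j,i\le r}\cdot\begin{pmatrix}\ts{\sigma(\zeta)}(\omega_1)\\\vdots\\\ts{\sigma(\zeta)}(\omega_r)\end{pmatrix}\pmod{X^{n+1}},\]
the Galois action on the left being coefficient-wise (legal since $\omega_j^{(m)}(\zeta)\in\phi[\pfrak^{m+1}]\subset F^{\mathrm{sep}}$ by Proposition~\ref{prop:omega-n-integral}).

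Next I would reconcile the two notions of ``level-$n$ truncation'' appearing on either side: on $A_\pfrak$ via powers of $\pfrak$ (the Galois representation \eqref{eq:galois-rep} only sees $a_{\sigma,j,i}\bmod\pfrak^{n+1}$), and on $\Fq(\zeta)\ps{X}$ via powers of $X$. By Lemma~\ref{lem:D_zeta-isomorphism} and its proof, $\ts{\sigma(\zeta)}(\pfrak)=\pfrak'(\sigma(\zeta))X+O(X^2)$ is a uniformizer of $\Fq(\sigma(\zeta))\ps{X}$, because $\pfrak$ is separable. Hence the isomorphism $\ts{\sigma(\zeta)}\circ\chi_t$ carries $\pfrak^{n+1}A_\pfrak$ onto $X^{n+1}\Fq(\sigma(\zeta))\ps{X}$, so using $a_{\sigma,j,i}\in A_\pfrak$ itself rather than its reduction mod $\pfrak^{n+1}$ does not alter the congruence class mod $X^{n+1}$. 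Combining with the previous step, the claimed identity holds modulo $X^{n+1}$ for every $n\ge 0$; since $\bigcap_n X^{n+1}\CI\ps{X}^r=0$, the full equality in $\CI\ps{X}^r$ follows.

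I expect the main obstacle to be purely organizational: lining up the reversed ordering of hyperderivatives inside $X_z^{[n+1]}$ with the natural ordering inside $\ts{z}(\omega_j)$, and carefully transporting the reduction $\bmod\,\pfrak^{n+1}$ on $A_\pfrak$ through the isomorphism of Lemma~\ref{lem:D_zeta-isomorphism} to the reduction $\bmod\,X^{n+1}$ on power series. Once that dictionary is laid out, the proposition is a direct consequence of Lemma~\ref{galoisactionlem} together with Lemma~\ref{lem:D_zeta-isomorphism}.
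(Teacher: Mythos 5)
Your proposal is correct and is essentially the paper's proof: the paper simply remarks that Proposition~\ref{galoisactionprop} is ``the limit version of the previous lemma written in a different form,'' and you are spelling out the two bookkeeping points that make that true, namely that $\rho_z^{[n+1]}$ and the block vectors $X_z^{[n+1]}$ encode $\ts{z}$ modulo $X^{n+1}$, and that via Lemma~\ref{lem:D_zeta-isomorphism} the reduction modulo $\pfrak^{n+1}$ on $A_\pfrak$ corresponds exactly to reduction modulo $X^{n+1}$ on $\Fq(\zeta)\ps{X}$, so the congruences are consistent and one may pass to the projective limit.
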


\begin{proof}
This is just the limit version of the previous lemma written in a different form.
\end{proof}

\begin{rem}\label{rem:torsion-and-zeta}
\begin{enumerate}
\item If $F(\phi[\pfrak^{\infty}])$ and $F(\zeta)$ are linearly disjoint over $F$, the Galois group $\Gal(F(\phi[\pfrak^{\infty}])/F)$ is isomorphic to $\Gal(F(\zeta,\phi[\pfrak^{\infty}])/F(\zeta))$, and hence
the subgroup $\Gal(F^{\rm sep}/F(\zeta))$ of $\Gal(F^{\rm sep}/F)$ still surjects onto the group $\Gal(F(\phi[\pfrak^{\infty}])/F)$. The previous proposition, therefore, gives a uniform description of the Galois action of $\Gal(F(\phi[\pfrak^{\infty}])/F)$ in terms of the Anderson generating functions, but without a twist in the $\zeta$.
\item If the completion $F_\infty$ of $F$ with respect to $|\cdot |$ is a finite extension of $K_\infty$ -- in particular, if $F$ is finite over $K$ -- the condition that the extensions $F(\phi[\pfrak^{\infty}])$ and $F(\zeta)$ are linearly disjoint over $F$, holds for infinitely many primes $\pfrak$.
Namely, $F(\phi[\pfrak^{\infty}])$ is contained in $F_\infty(\phi[\pfrak^{\infty}])$ which equals
$F_\infty(z_1,\ldots, z_r)$ by \cite[Prop.~1.2]{Gek}, and is a finite extension of $F_\infty$. Therefore, the algebraic closure of $\Fq$ in $F_\infty(z_1,\ldots, z_r)$ is a finite extension $\FF$ of $\Fq$. Therefore, for all primes $\pfrak$ for which $\deg(\pfrak)$ is prime to 
$[\FF:\Fq]$, the extensions $F(\phi[\pfrak^{\infty}])$ and $F(\zeta)$ are linearly disjoint over $F$.
\item Using a theorem of Pink and R\"utsche on the adelic openness of the Galois image in the adelic points of the Mumford-Tate group (see \cite[Thm.~0.2]{PR}), one can even show that there are only finitely many primes $\pfrak$ for which the extensions $F(\phi[\pfrak^{\infty}])$ and $F(\zeta)$ are not linearly disjoint. Since, we don't need it, we don't go into details here.
\end{enumerate}
\end{rem}

\section{Direct link to the motivic Galois group}\label{sec:link-to-motivic-group}

In this section, we show explicitly that the image of the Galois representation $\varphi_\pfrak$ lies in the $A_\pfrak$-points of the 
motivic Galois group. This part is a generalization to arbitrary primes $\pfrak$ of a result of Chang and Papanikolas (see \cite[Cor.~3.2.4 and Thm.~3.5.1]{cc-mp:aipldm}). 
Since the dual $t$-motive is only defined over perfect base fields, we further need $\Fper:=F^{1/p^\infty}\subseteq \CI$, the perfect closure of $F$. We will also use $\Fac\subseteq \CI$ the algebraic closure of $F$, and as before $\Fsep$ the separable algebraic closure of $F$. We remind the reader that $\Fac$ is the composite of $\Fper$ and $\Fsep$ (see \cite[Prop.~6.11]{Lang}), and that
one has the natural isomorphism of Galois groups 
$\Gal(\Fsep/F)\cong \Gal(\Fac/\Fper)$ which we will use in the following without mentioning.

We consider the $t$-motive $\dumot$ associated to $\phi$ in the same way as in \cite[\S 3.3]{cc-mp:aipldm}, but with $\Fac$ replaced by $\Fper$. That means that $\dumot$ is a $\Fper(t)[\tau^{-1}]$-module, it is isomorphic to $\left(\Fper(t)\right)^r$ as $\Fper(t)$-module, and multiplication by $\tau^{-1}$ on $\dumot$ with respect to the standard basis is represented by the matrix 
\[ \Phi_\phi = \begin{pmatrix} 0 & 1 & \cdots & 0 \\ 
\vdots & \vdots & \ddots & \vdots \\
0 & 0 & \cdots & 1 \\
\frac{(t-\theta)}{g_r^{q^{-r}}} & -\frac{g_1^{q^{-1}}}{g_r^{q^{-r}}} & \cdots & -\frac{g_{r-1}^{q^{-r+1}}}{g_r^{q^{-r}}} 
\end{pmatrix}, \]
where as before $\phi_\theta := \theta + g_1 \tau + \cdots + g_r \tau^r\in F\{\tau\}$.
As in \cite[\S 3.4]{cc-mp:aipldm}, we define
\[  \Upsilon:= \begin{pmatrix} \omega_1 & \omega_1^{\tau} &\ldots & \omega_1^{\tau^{r-1}} \\
\vdots & & &  \vdots \\
  \omega_r & \omega_r^{\tau} &\ldots & \omega_r^{\tau^{r-1}}
\end{pmatrix} \in \GL_r(\TT). \]
Then, for a  suitable matrix $V\in \GL_r(\Fper)$ (whose explicit shape will not be needed in the sequel), the matrix
\[ \Psi_\phi:= V^{-1} (\Upsilon^\tau)^{-1}\in \GL_r(\TT).\]
is a rigid analytic trivialization of $\dumot$ for $\Phi_\phi$.

From Proposition \ref{galoisactionprop}, we deduce the following analog of \cite[Cor.~3.2.4]{cc-mp:aipldm}.

\begin{prop}\label{prop:sigma-on-psi}
For each $\sigma\in \Gal(F^{\rm sep}/F)$, one has
\[  \sigma \bigl( \ts{\zeta}(\Upsilon^\tau) \bigr) = \ts{\sigma(\zeta)}( \mathcal{A}_\sigma) \cdot\ts{\sigma(\zeta)}(\Upsilon^\tau),   \]
as well as
\[ \sigma \bigl(\ts{\zeta}(\Psi_\phi) \bigr) =\ts{\sigma(\zeta)}(\Psi_\phi) \cdot \ts{\sigma(\zeta)}( \mathcal{A}_\sigma)^{-1} ,\]
where $\mathcal{A}_\sigma=\varphi_\pfrak(\sigma)\in \GL_r(A_\pfrak)$.\\
\end{prop}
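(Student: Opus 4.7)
The plan is to derive the first identity from Proposition \ref{galoisactionprop} by applying Frobenius twists column by column, and then obtain the second identity by inverting the first and absorbing the $\sigma$-fixed matrix $V^{-1}$.

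For the first identity, fix a column index $i\in\{1,\ldots,r\}$. Because the roots of $\pfrak$ form a single Frobenius orbit in $\Fqac$, I can write the chosen root $\zeta$ as $\zeta=(\zeta')^{q^i}$ for some root $\zeta'$ of $\pfrak$. I would apply Proposition \ref{galoisactionprop} at $\zeta'$ and then apply $\tau^i$ to both sides of the resulting column identity. Three compatibilities then combine: first, $\sigma$ commutes with $\tau^i$ on $\CI\ps{X}$ because Galois automorphisms commute with the $q$-Frobenius on $\CI$ and $\tau$ is extended linearly in $X$; second, iterating Lemma \ref{lem:Taylor-series-and-twisting}(2) yields $(\ts{\zeta'}(\omega_k))^{\tau^i}=\ts{(\zeta')^{q^i}}(\omega_k^{\tau^i})=\ts{\zeta}(\omega_k^{\tau^i})$; third, iterating Lemma \ref{lem:Taylor-series-and-twisting}(3) yields $(\ts{\sigma(\zeta')}(a_{\sigma,j,k}))^{\tau^i}=\ts{\sigma(\zeta')^{q^i}}(a_{\sigma,j,k})=\ts{\sigma(\zeta)}(a_{\sigma,j,k})$, using that $\sigma$ commutes with Frobenius so that $\sigma(\zeta')^{q^i}=\sigma((\zeta')^{q^i})=\sigma(\zeta)$. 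Together these convert the $\tau^i$-twist of Proposition \ref{galoisactionprop} into the $i$-th column of the claimed matrix identity,
\[ \sigma\bigl(\ts{\zeta}(\omega_j^{\tau^i})\bigr) = \sum_{k=1}^r \ts{\sigma(\zeta)}(a_{\sigma,j,k})\cdot\ts{\sigma(\zeta)}(\omega_k^{\tau^i}),\qquad j=1,\ldots,r. \]
Assembling over $i=1,\ldots,r$ gives $\sigma(\ts{\zeta}(\Upsilon^\tau))=\ts{\sigma(\zeta)}(\mathcal{A}_\sigma)\cdot\ts{\sigma(\zeta)}(\Upsilon^\tau)$.

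For the second identity, since $V\in\GL_r(\Fper)$ and $\sigma\in\Gal(F^{\rm sep}/F)\cong\Gal(\Fac/\Fper)$ fixes $\Fper$ entrywise, one has $\sigma(V^{-1})=V^{-1}$; moreover $\ts{\zeta}(V^{-1})=V^{-1}=\ts{\sigma(\zeta)}(V^{-1})$ since $V$ has $X$-constant entries. Using $\Psi_\phi=V^{-1}(\Upsilon^\tau)^{-1}$ together with the inverse of the first identity, I obtain
\[ \sigma(\ts{\zeta}(\Psi_\phi)) = V^{-1}\bigl(\sigma(\ts{\zeta}(\Upsilon^\tau))\bigr)^{-1} = V^{-1}\ts{\sigma(\zeta)}(\Upsilon^\tau)^{-1}\ts{\sigma(\zeta)}(\mathcal{A}_\sigma)^{-1} = \ts{\sigma(\zeta)}(\Psi_\phi)\cdot\ts{\sigma(\zeta)}(\mathcal{A}_\sigma)^{-1}. \]

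The main obstacle is the first identity: coordinating how the Frobenius twist $\tau^i$ shifts both the Taylor evaluation point (from $\zeta'$ to $(\zeta')^{q^i}=\zeta$) and the evaluation point appearing inside the entries of $\mathcal{A}_\sigma$ (from $\sigma(\zeta')$ to $\sigma(\zeta')^{q^i}=\sigma(\zeta)$), so that the two $\tau^i$-twisted sides really carry the same $\zeta$, $\sigma(\zeta)$ labels. This matchup works because the $A_\pfrak$-valued entries of $\mathcal{A}_\sigma$ are $\tau$-stable in the precise sense of Lemma \ref{lem:Taylor-series-and-twisting}(3), and because Frobenius on $\Fqac$ permutes the roots of $\pfrak$ transitively.
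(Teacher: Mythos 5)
Your proof is correct and follows essentially the same route as the paper's: applying Proposition~\ref{galoisactionprop} at the root $\zeta^{1/q^i}$ (your $\zeta'$), twisting by $\tau^i$, and using Lemma~\ref{lem:Taylor-series-and-twisting}(2),(3) together with the commutation of $\sigma$ with Frobenius to recover the $i$-th column of the matrix identity, then inverting and absorbing the $\sigma$-fixed matrix $V^{-1}$ for the second identity. The only difference is that you make the bookkeeping of twists and roots a bit more explicit.
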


\begin{proof}
Twisting commutes with the Galois action on the coefficients.
Therefore, using Lemma \ref{lem:Taylor-series-and-twisting} and  Proposition~\ref{galoisactionprop}, one obtains for the $i$-th column of $\Upsilon^\tau$:
\begin{eqnarray*}
  \sigma \bigl( \ts{\zeta}\left( \begin{smatrix}
\omega_1 \\ \vdots \\ \omega_r \end{smatrix}^{\!\!\tau^i}\right) \bigr) 
&=& \left(   \sigma \bigl( \ts{\zeta^{1/q^i}} \begin{smatrix}
\omega_1 \\ \vdots \\ \omega_r \end{smatrix}\bigr)   \right)^{\!\!\tau^i}
= \left( \ts{\sigma(\zeta^{1/q^i})}(\mathcal{A}_\sigma)\cdot 
\ts{\sigma(\zeta^{1/q^i})} \begin{smatrix}
\omega_1 \\ \vdots \\ \omega_r \end{smatrix} \right)^{\!\!\tau^i}\\
&=&  \ts{\sigma(\zeta)}( \mathcal{A}_\sigma) \cdot  \ts{\sigma(\zeta)}\left( \begin{smatrix}
\omega_1 \\ \vdots \\ \omega_r \end{smatrix}^{\!\!\tau^i}\right).
\end{eqnarray*}

Using $ \Psi_\phi= V^{-1} (\Upsilon^\tau)^{-1}$, and taking into account that $V\in \GL_r(\Fper)$ is fixed by $\sigma$, we finally get
\begin{eqnarray*}
\sigma \bigl( \ts{\zeta}(\Psi_\phi)\bigr) &=& \sigma \bigl( \ts{\zeta}\left( V^{-1} (\Upsilon^\tau)^{-1}\right) \bigr)
= V^{-1}\cdot \left( \sigma \bigl( \ts{\zeta}(\Upsilon^\tau) \bigr) \right)^{-1} \\
&=& V^{-1}\cdot \ts{\sigma(\zeta)}(\Upsilon^\tau)^{-1} \cdot \ts{\sigma(\zeta)}( \mathcal{A}_\sigma)^{-1} \\
&=& \ts{\sigma(\zeta)}(\Psi_\phi) \cdot \ts{\sigma(\zeta)}( \mathcal{A}_\sigma)^{-1}.
\end{eqnarray*}
\end{proof}

We use the same characterization of the Galois group $\Gamma_{\Psi_\phi}$ associated to $\Psi_\phi$ as in \cite[Sect.~4.2]{mp:tdadmaicl}. Although the hypotheses there are stronger than in our setting, more recent results in difference Galois theory provide the necessary relaxed hypotheses. We refer here to the work of A.~Bachmayr \cite[Sect.~2]{am:dvnt} and \cite[Ch.~1]{am:phd}.\\
The fact that the motivic Galois group $\Gamma_{M_\phi}$, i.e.~the Tannakian Galois group of the $t$-motive $M_\phi$, is isomorphic to the Galois group $\Gamma_{\Psi_\phi}$ associated to $\Psi_\phi$ is guaranteed by the general framework in \cite[Corollary 7.8]{am:capvt}.

First of all, let $\LL=\Frac(\TT)$ be the field of fractions of $\TT$. We denote by
$P = \Fper(t)[(\Psi_\phi)_{ij}, \det(\Psi_\phi)^{-1}]\subseteq \LL$ the so-called Picard-Vessiot ring for $\Phi_\phi$.
It is isomorphic to the quotient of $\Fper(t)[Y_{ij},\det(Y)^{-1}]$ by the ideal
\[ S:=\{ h\in \Fper(t)[Y_{ij},\det(Y)^{-1}] \mid h(\Psi_\phi)=0 \}.\]
Then, for each $\Fq(t)$-algebra $R$, the $R$-points of $\Gamma_{\Psi_\phi}$ are defined to be the difference automorphisms of the difference ring $(P \otimes_{\Fq(t)} R, \tau^{-1}\otimes \id_R)$ \footnote{This means that the endomorphism on this tensor product is given by $\tau^{-1}$ on the first component and by the identity on the second.}. Explicitly, one has
\[ \Gamma_{\Psi_\phi}(R)=\left\{ \mathcal{A}\in \GL_r(R) \mid  \forall h\in S: h( \Psi_\phi\otimes \mathcal{A})=0 \in P\otimes R \right\}. \]

Here and in the following, the tensor product $\Psi_\phi\otimes \mathcal{A}$ of two square matrices of size $r$ denotes the square matrix of the same size with entries
$(\Psi_\phi\otimes \mathcal{A})_{ij} = \sum_{k=1}^r (\Psi_\phi)_{ik}\otimes (\mathcal{A})_{kj}$.

\begin{thm}\label{thm:image-of-gal-rep-in-motivic-gal-grp}
Let $\bA_{\pfrak(t)}$ be the completion of $\Fq[t]$ at the prime ideal $\pfrak(t)\in \Fq[t]$, and 
$\KK_{\pfrak(t)}$ its field of fractions. Consider
 $\chi_t$ extended continuously to a map $\chi_t:A_\pfrak\to \bA_{\pfrak(t)}\subseteq \KK_{\pfrak(t)}$. Then
 \[\chi_t\bigl( \varphi_\pfrak(\Gal(F^{\rm sep}/F))\bigr) \subseteq 
 \Gamma_{\Psi_\phi}(\bA_{\pfrak(t)}):= \GL_r(\bA_{\pfrak(t)})\cap \Gamma_{\Psi_\phi}(\KK_{\pfrak(t)}). \]
\end{thm}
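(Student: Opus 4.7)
The plan is to adapt Chang and Papanikolas' argument (loc.~cit., proof of Thm.~3.5.1), using Proposition~\ref{prop:sigma-on-psi} as the substitute for their direct $\theta$-adic evaluation identity. Fix $\sigma \in \Gal(F^{\rm sep}/F)$ and write $\mathcal{A} := \varphi_\pfrak(\sigma) \in \GL_r(A_\pfrak)$. Since $\Gamma_{\Psi_\phi}$ is a group scheme, it suffices to prove $\chi_t(\mathcal{A})^{-1} \in \Gamma_{\Psi_\phi}(\bA_{\pfrak(t)})$, which by definition means: for every $h \in \Fper(t)[Y_{ij},\det(Y)^{-1}]$ with $h(\Psi_\phi)=0$ in $P$, we must show $h(\Psi_\phi \otimes \chi_t(\mathcal{A})^{-1}) = 0$ in $P \otimes_{\Fq(t)} \bA_{\pfrak(t)}$.

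\textbf{Taylor embedding of the tensor product.} By Lemma~\ref{lem:D_zeta-isomorphism}, $\bA_{\pfrak(t)}$ embeds into $\Fq(\zeta)\ps{X} \subset \CI\ps{X}$ via the continuous extension of $\ts{\zeta}\circ \chi_t$, under which $t\mapsto \zeta+X$. On the other hand, $\ts{\zeta}:\TT\to \CI\ps{X}$ is injective and extends on fraction fields to $\LL=\Frac(\TT)\hookrightarrow \CI\ls{X}$, again sending $t\in\Fper(t)\subset \LL$ to $\zeta+X$. The two $\Fq(t)$-structures are thus compatible, and I obtain a canonical $\Fq(t)$-algebra homomorphism
\[ \Theta_\zeta : P \otimes_{\Fq(t)} \bA_{\pfrak(t)} \longrightarrow \CI\ls{X}, \]
which realizes the vanishing to be proved on a ring concrete enough to compute with.

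\textbf{Main computation.} Starting from $h(\Psi_\phi)=0$ in $P$, apply $\ts{\zeta}$ to obtain $h^{[\zeta]}\bigl(\ts{\zeta}(\Psi_\phi)\bigr) = 0$ in $\CI\ls{X}$, where $h^{[\zeta]}$ denotes the polynomial whose coefficients are the Taylor expansions at $t=\zeta$ of the coefficients of $h$. Now apply $\sigma$: since $\sigma$ acts trivially on $\Fper$ and sends $\zeta+X \mapsto \sigma(\zeta)+X$, one gets $\sigma(h^{[\zeta]}) = h^{[\sigma(\zeta)]}$, and combining with Proposition~\ref{prop:sigma-on-psi} yields
\[ 0 \;=\; h^{[\sigma(\zeta)]}\bigl(\ts{\sigma(\zeta)}(\Psi_\phi)\cdot \ts{\sigma(\zeta)}(\mathcal{A})^{-1}\bigr) \;=\; \Theta_{\sigma(\zeta)}\bigl(h(\Psi_\phi \otimes \chi_t(\mathcal{A})^{-1})\bigr). \]
As $\zeta$ ranges over the roots of $\pfrak$, so does $\sigma(\zeta)$; hence $\Theta_{\zeta'}\bigl(h(\Psi_\phi\otimes \chi_t(\mathcal{A})^{-1})\bigr)$ vanishes for every root $\zeta'$ of $\pfrak$.

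\textbf{Expected main obstacle.} To conclude the vanishing in $P\otimes_{\Fq(t)}\bA_{\pfrak(t)}$ itself from its vanishing under each $\Theta_{\zeta'}$ is the principal technical point, as the tensor product is not a domain in general. I would approach it by factoring $\Theta_{\zeta'}$ through the $(t-\zeta')$-adic completion of the localization $P \otimes_{\Fq[t]} \Fq[t]_{(\pfrak(t))}$, and then using the injectivity of $\ts{\zeta'}$ on $\TT$ together with faithful flatness of $\pfrak(t)$-adic completion over localization to show $\Theta_{\zeta'}$ is injective on the $\Fq[t]$-subalgebra generated by the entries of $\Psi_\phi$, $\Psi_\phi^{-1}$, and of $\chi_t(\mathcal{A})^{\pm 1}$; this subalgebra contains the image of the element in question. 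Once this injectivity is established, the desired vanishing in $P\otimes \bA_{\pfrak(t)}$ follows, and hence $\chi_t(\mathcal{A})\in \Gamma_{\Psi_\phi}(\bA_{\pfrak(t)})$ as claimed.
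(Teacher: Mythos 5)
Your main computation is exactly the paper's: fix $\sigma$, pick $h\in S$ with coefficients in $\Fper[t]$, apply $\ts{\zeta}$ and Proposition~\ref{prop:sigma-on-psi} to conclude that $\ts{\zeta'}$ of $h(\Psi_\phi\otimes\chi_t(\mathcal{A}_\sigma^{-1}))$ vanishes for every root $\zeta'$ of $\pfrak$. Up to this point the proposal matches the paper's proof of Theorem~\ref{thm:image-of-gal-rep-in-motivic-gal-grp}.

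The final descent step, however, is where you diverge from the paper and where there is a genuine gap. You propose to show that a \emph{single} map $\Theta_{\zeta'}$ is injective on the subalgebra generated by the entries of $\Psi_\phi^{\pm 1}$ and $\chi_t(\mathcal{A})^{\pm 1}$, invoking localization, $\pfrak(t)$-adic completion and faithful flatness. But a single $\Theta_{\zeta'}$ cannot be injective on the ambient ring: inside $\LL\otimes_{\Fq(t)}\KK_{\pfrak(t)}$ one finds $\Fq(\zeta)(t)\otimes_{\Fq(t)}\Fq(\zeta)(t)\cong\Fq(\zeta)(t)^{d}$, so the ring decomposes into $d$ branches over $\pfrak(t)$ and $\Theta_{\zeta'}$ kills the $d-1$ idempotents not corresponding to $\zeta'$ (in particular, $\zeta\otimes 1-1\otimes\zeta$ is a nonzero element of $\ker\Theta_{\zeta'}$). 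Your proposal would therefore have to establish that the $\Fq[t]$-subalgebra you describe avoids these kernel elements, which is neither argued nor obvious; the completion/faithful-flatness appeal does not address it. The paper avoids this issue entirely by using \emph{all} $d$ evaluations $\ts{\zeta_1},\dots,\ts{\zeta_d}$ simultaneously and then applying Lemma~\ref{lem:Dzeta-on-tensor-product}: the simultaneous kernel is a $\tau$-difference ideal of $\LL\otimes_{\Fq(t)}\KK_{\pfrak(t)}$, hence (since $\LL$ has $\tau$-invariants $\Fq(t)$, by \cite[Lemma~3.3.2]{mp:tdadmaicl} and \cite[Lemma~1.2.9]{am:phd}) it is generated by elements of $\KK_{\pfrak(t)}$, on which the evaluation maps are injective. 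This difference-Galois-theoretic input is the key ingredient missing from your outline; without it, or a worked-out substitute, the proof is incomplete.
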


For the proof, we will consider the Taylor series homomorphism $\ts{\zeta}$ on the tensor product $\TT\otimes_{\Fq[t]} \bA_{\pfrak(t)}$
as a homomorphism (by abuse of notation also denoted by $\ts{\zeta}$)
\[ \ts{\zeta}: \TT\otimes_{\Fq[t]} \bA_{\pfrak(t)}\to \CI\ps{X}, \sum_i h_i\otimes a_i\mapsto \sum_i  \ts{\zeta}(h_i)\cdot \ts{\zeta}(a_i). \]
This homomorphism $\ts{\zeta}$ might not be injective in general, but we have the following lemma.

\begin{lem}\label{lem:Dzeta-on-tensor-product}
Let as before $\zeta_1,\ldots,\zeta_d$ be all roots of $\pfrak$.
Let $f\in \TT\otimes_{\Fq[t]} \bA_{\pfrak(t)}$ be such that $\ts{\zeta_k}(f)=0$ for all $k=1,\ldots, d$. Then $f=0$.
\end{lem}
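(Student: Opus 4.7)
The plan is to reformulate the claim as injectivity of the natural ring homomorphism $\mu : \TT\otimes_{\Fq[t]}\bA_{\pfrak(t)} \to \widehat{\TT}_{\pfrak(t)}$ into the $\pfrak(t)$-adic completion of the Tate algebra, and then prove this via a polynomial approximation combined with a Noetherian reduction. First I would establish $\widehat{\TT}_{\pfrak(t)} \cong \prod_{k=1}^d \CI\ps{X}$ with the isomorphism given by $h \mapsto (\ts{\zeta_k}(h))_k$. This follows from the factorization $\pfrak(t) = \prod_k (t-\zeta_k)$ in $\TT$ with pairwise coprime factors (since the $\zeta_k$ are distinct, $\zeta_j - \zeta_k \in \TT^\times$ lies in $(t-\zeta_k)+(t-\zeta_j)$), so that the Chinese Remainder Theorem gives $\TT/\pfrak(t)^M \cong \prod_k \TT/(t-\zeta_k)^M \cong \prod_k \CI[X]/(X^M)$, with passage to the limit. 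Under this identification, the composition of the natural map $\mu$ with the CRT isomorphism is exactly $f \mapsto (\ts{\zeta_k}(f))_k$, so the lemma is equivalent to $\mu$ being injective.

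For injectivity, I would start with a polynomial approximation: writing $f = \sum_i h_i \otimes a_i$ finite, for each $M$ approximate $a_i$ by $\tilde a_i^{(M)} \in \Fq[t]$ with $a_i \equiv \tilde a_i^{(M)} \pmod{\pfrak(t)^M \bA_{\pfrak(t)}}$, and set $G_M := \sum_i h_i \tilde a_i^{(M)} \in \TT$. Then $f \equiv G_M \otimes 1 \pmod{\pfrak(t)^M(\TT \otimes_{\Fq[t]} \bA_{\pfrak(t)})}$, and the hypothesis $\ts{\zeta_k}(f) = 0$ forces $\ts{\zeta_k}(G_M) \in X^M \CI\ps{X}$ for every $k$; equivalently, all hyperderivatives of $G_M$ up through order $M-1$ vanish at each $\zeta_k$. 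Since $\TT$ is a PID with the $(t-\zeta_k)$ as distinct primes, this yields $(t-\zeta_k)^M \mid G_M$ for each $k$, and hence $\pfrak(t)^M \mid G_M$ in $\TT$ by pairwise coprimality, so $f \in \pfrak(t)^M (\TT \otimes_{\Fq[t]} \bA_{\pfrak(t)})$ for every $M \geq 1$.

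To conclude $f = 0$ from this intersection condition, I would reduce to a Noetherian submodule and invoke Krull's theorem. Let $V_0 := \sum_i \Fq[t] h_i$ and $\tilde V$ its $\pfrak(t)$-saturation in $\TT$, so that $\TT/\tilde V$ is $\pfrak(t)$-torsion-free by construction. After localizing at $\pfrak(t)$, $\tilde V$ becomes a finitely generated submodule of $V_0 \otimes_{\Fq[t]} \Fq[t]_{(\pfrak(t))}$ over the Noetherian ring $\Fq[t]_{(\pfrak(t))}$, so $\tilde V \otimes_{\Fq[t]} \bA_{\pfrak(t)} \cong \bA_{\pfrak(t)}^{m}$ is a finitely generated free $\bA_{\pfrak(t)}$-module. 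The $\pfrak(t)$-torsion-freeness of $\TT/\tilde V$ combined with flatness of $\bA_{\pfrak(t)}$ over $\Fq[t]$ yields the Tor-vanishing that ensures $(\tilde V\otimes_{\Fq[t]} \bA_{\pfrak(t)}) \cap \pfrak(t)^M(\TT\otimes_{\Fq[t]}\bA_{\pfrak(t)}) = \pfrak(t)^M(\tilde V\otimes_{\Fq[t]}\bA_{\pfrak(t)})$ for every $M$. Thus $f \in \bigcap_M \pfrak(t)^M \bA_{\pfrak(t)}^m = 0$ by Krull. The main technical difficulty will be the saturation step: ensuring $\tilde V$ behaves well with respect to the tensor product despite $\TT$ not being finitely generated over $\Fq[t]$; localizing at $\pfrak(t)$ before tensoring is the key trick to handle this cleanly.
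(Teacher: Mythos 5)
Your reduction to showing $\bigcap_M\pfrak(t)^M\bigl(\TT\otimes_{\Fq[t]}\bA_{\pfrak(t)}\bigr) = 0$ is sound: the CRT isomorphism $\TT/\pfrak(t)^M\cong\prod_{k=1}^d\CI[X]/X^M$ is correct, the identification of the natural map with $f\mapsto(\ts{\zeta_k}(f))_k$ is right, and the polynomial-approximation step --- using that $\TT$ is a PID with the $(t-\zeta_k)$ pairwise coprime primes to deduce $\pfrak(t)^M\mid G_M$ from the vanishing of the low-order Taylor coefficients --- goes through without issue. This is a genuinely different route from the paper's.

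The gap is in the final paragraph, exactly at the step you flag as the main technical difficulty. To invoke Krull you need $\tilde V_{(\pfrak(t))}$ to be finitely generated over $\Fq[t]_{(\pfrak(t))}$, and this does \emph{not} follow from the abstract setup: being torsion-free, of finite rank, and ``reduced'' (i.e.\ having trivial intersection of $\pfrak(t)$-power multiples) is not enough over the \emph{non-complete} DVR $\Fq[t]_{(\pfrak(t))}$. For a counterexample over $\ZZ_{(p)}$: pick $\alpha\in\ZZ_p\setminus\ZZ_{(p)}$ with truncations $a_n\in\ZZ$, $a_n\equiv\alpha\pmod{p^n}$, and set $N=\bigcup_{n\geq 0}\bigl(\ZZ_{(p)}e_1+\ZZ_{(p)}\,p^{-n}(a_ne_1+e_2)\bigr)\subset\QQ^2$; this is torsion-free of rank $2$ with $\bigcap_kp^kN=0$, yet not finitely generated. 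So localizing at $\pfrak(t)$ before tensoring does not by itself rescue the argument. To close the gap you would have to prove a concrete bound on the $\pfrak(t)$-order of denominators of elements of the saturation, using structure special to $\TT$ (for instance, that in the semi-localization of the PID $\TT$ at the primes $(t-\zeta_1),\dots,(t-\zeta_d)$ the $\Fq[t]$-span of $h_1,\dots,h_a$ generates a principal ideal), and your proposal does not supply such an argument.

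The paper sidesteps this entirely by a difference-algebra argument rather than a commutative-algebra one. It localizes all the way to $\LL\otimes_{\Fq(t)}\KK_{\pfrak(t)}$ with $\LL=\Frac(\TT)$, observes that the common kernel $I$ of the maps $\widetilde{\ts{\zeta_k}}$ is a $\tau\otimes\id$-stable ideal, and invokes the standard fact that, since $\LL$ is a field with $\tau$-fixed field $\Fq(t)$, any such ideal of $\LL\otimes_{\Fq(t)}\KK_{\pfrak(t)}$ is generated by its intersection with $\KK_{\pfrak(t)}$; that intersection is $0$ because $\ts{\zeta_k}$ is injective on $\KK_{\pfrak(t)}$ by Lemma~\ref{lem:D_zeta-isomorphism}. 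This crucially uses the $\tau$-equivariance of the Taylor maps from Lemma~\ref{lem:Taylor-series-and-twisting}, a leverage your purely commutative approach gives up --- and the price is precisely the unproven finite-generation step.
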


\begin{proof}
As $\ts{\zeta_k}$ is injective on $\TT$, we can extend $\ts{\zeta_k}$ to a ring homomorphism 
on the localization $\LL\otimes_{\TT}\left( \TT\otimes_{\Fq[t]} \bA_{\pfrak(t)}\right) \cong \LL\otimes_{\Fq[t]} \bA_{\pfrak(t)} \cong 
\LL\otimes_{\Fq(t)} \KK_{\pfrak(t)}$,
\[ \widetilde{\ts{\zeta_k}}: \LL\otimes_{\Fq(t)} \KK_{\pfrak(t)} \to \CI\ls{X}. \]
The aim is to show that the set
\[ I := \{ g \in \LL\otimes_{\Fq(t)} \KK_{\pfrak(t)} \mid \forall k=1,\ldots, d: \widetilde{\ts{\zeta_k}}(g)=0 \} \]
equals $\{0\}$.
This set $I$ is an ideal, since all $\widetilde{\ts{\zeta_k}}$ are ring homomorphisms. It is even stable under $\tau\otimes \id_{\KK_{\pfrak(t)}}$ by Lemma \ref{lem:Taylor-series-and-twisting}. Hence, it is a difference ideal of the difference ring 
$(\LL\otimes_{\Fq(t)} \KK_{\pfrak(t)}, \tau\otimes \id_{\KK_{\pfrak(t)}})$.
However, $\LL$ is a field with $\tau$-invariants $\Fq(t)$ (see \cite[Lemma 3.3.2]{mp:tdadmaicl}), and hence the difference ideal $I$ is generated by elements in $\KK_{\pfrak(t)}$ (see \cite[Lemma 1.2.9]{am:phd}). But $\widetilde{\ts{\zeta_k}}|_{\KK_{\pfrak(t)}}$ is injective, and therefore $I=0$.
\end{proof}

\begin{proof}[Proof of Thm.~\ref{thm:image-of-gal-rep-in-motivic-gal-grp}]
Let $\sigma\in \Gal(F^{\rm sep}/F)$, and $\mathcal{A}_\sigma=\varphi_\pfrak(\sigma)$. We will prove the theorem by showing that for arbitrary $h\in S$, the condition $h(\Psi_\phi \otimes \chi_t(\mathcal{A}_\sigma^{-1}))=0$ is satisfied. 
Of course, it is sufficient to consider those $h\in S$ whose coefficients (with respect to the $Y_{ij}$) lie in $\Fper[t]$. So let  $h\in S$ be such an element, and denote by $h_\zeta\in \CI\ps{X}[Y_{ij},\det(Y)^{-1}]$ its image when mapping the coefficients via $\ts{\zeta}$. Then
\begin{eqnarray*}
\ts{\zeta}\bigl( h(\Psi_\phi\otimes \chi_t(\mathcal{A}_\sigma^{-1})) \bigr) 
&=& h_{\zeta}\left( \ts{\zeta}(\Psi_\phi)\cdot \ts{\zeta}(\mathcal{A}_\sigma^{-1})\right)\\
&=& h_{\zeta}\left( \sigma \bigl( \ts{\sigma^{-1}(\zeta)}(\Psi_\phi)\bigr) \right)\quad \text{by Prop.~\ref{prop:sigma-on-psi}}\\
&=& \sigma \left( h_{\sigma^{-1}(\zeta)}(  \ts{\sigma^{-1}(\zeta)}(\Psi_\phi) )\right) \quad \text{as coeff.~of $h$ are in }\Fper[t] \\
&=& \sigma  \ts{\sigma^{-1}(\zeta)}\left(  h(\Psi_\phi)\right) \quad =\,\, 0.
\end{eqnarray*}
As this holds for any root $\zeta$ of $\pfrak$, the claim follows from Lemma \ref{lem:Dzeta-on-tensor-product}.
\end{proof}

\section{Variation of the lattice} \label{sec:modular-forms}

In this final section, we briefly address a connection to modular functions and vector valued modular forms. Namely, we consider the values $\omega_j^{(n)}(\zeta)$ as functions of the basis $\bm{z}=(z_1, \ldots, z_r)$ of the lattice and let $\bm{z}$ vary.

More precisely, let $\Omega^r$ denote Drinfeld's period domain (also called Drinfeld symmetric space), i.e.~the complement in $\mathbb{P}_{\CI}^{r-1}$ of the $K_\infty$-rational hyperplanes. The $\CC_\infty$ points of this space will be viewed as vectors $\bm{z} = (z_1,\dots,z_r) \in \CC_\infty^r \setminus \{K_\infty-hyperplanes\}$ whose last coordinate $z_r$ equals $\pitilde$, a fixed fundamental period of the Carlitz module.

For $\bm{z}\in \Omega^r$, let $\phi_{\bm{z}}$ be the Drinfeld module associated to the lattice $z_1A+\ldots +z_rA$, and as before, let $\zeta \in \CC_\infty$ be a root of the monic irreducible polynomial $\pfrak \in A$ of degree $d$.

\begin{defn}
For each $j = 1,2,\dots,r$, and $n\geq 0$, let $\underline{\omega}_j^{(n)}(\zeta) : \Omega^r \rightarrow \CC_\infty$ be the function defined by
\begin{equation} \label{modularAGFdef}
 \bm{z} \mapsto \sum_{k = 0}^{d-1} \exp_{\phi_{\bm{z}}}\left(\frac{\bm{z}\bm{e}_j \qfrak_{(n),k}}{\pfrak^{n+1}}\right)\zeta^k, 
\end{equation}
where $\bm{e}_j$ is the $j$-th column of the $r\times r$ identity matrix.
\end{defn}
Clearly by Prop.~\ref{explicitcoeffsprop}, for fixed $\bm{z} \in \Omega^r$, the values $\underline{\omega}_j^{(n)}(\zeta)(\bm{z})$ are exactly the previously defined evaluations $\omega_j^{(n)}(\zeta)$. 

It follows trivially from the definition \eqref{modularAGFdef} that the functions $\underline{\omega}_j^{(n)}(\zeta)$ are modular functions of weight $-1$ for the principal congruence subgroup of level $\pfrak^{n+1}$
\[ \Gamma(\pfrak^{n+1}) := \{\gamma \in \GL_r(A) : \gamma\equiv Id \pmod{\pfrak^{n+1}}\}, \]
due to the connection of universal torsion with Eisenstein series of weight $1$. 

Furthermore, one can deduce from Theorem \ref{Fqbasisthm} and \cite[Proposition 2.6 (i)]{GekelerIV} that the field generated over $\CC_\infty$ by the functions $\omega_j^{(m)}(\zeta)$ (with $0\leq m\leq n$ as in Theorem \ref{Fqbasisthm}) equals Gekeler's function field $\widetilde{\mathcal{F}}_r(\pfrak^{n+1})$. So, the specializations at roots of unity of Anderson generating functions do much more than just generate torsion extensions, they rigid analytically interpolate generators for function fields of Drinfeld modular curves of prime power levels!

\bigskip

In \cite{Perk}, the second author considered the Anderson generating functions themselves as functions on Drinfeld's period domain $\Omega^r$ with values in the Tate algebra $\TT$, and obtained that they are vector valued modular forms.

The case $r=2$ has already been encountered by Pellarin in \cite{FPannals}, and further studied by Pellarin and the second author in \cite{PP}. In this case, there is even a close connection of the Anderson generating functions to vector valued Eisenstein series.

It is desirable to have such a close connection also in the higher rank case. This will be in the scope of future research.


\end{document}